\documentclass{amsart}

\newtheorem{theorem}[equation]{Theorem}
\newtheorem{lemma}[equation]{Lemma}
\newtheorem{corollary}[equation]{Corollary}
\newtheorem{proposition}[equation]{Proposition}

\numberwithin{equation}{section}

\usepackage{amscd,amsmath,amssymb,verbatim}

\begin{document}

\title[$A$-hypergeometric systems that come from
geometry]{${A}$-hypergeometric systems that \\ come from geometry} 
\author{Alan Adolphson}
\address{Department of Mathematics\\
Oklahoma State University\\
Stillwater, Oklahoma 74078}
\email{adolphs@math.okstate.edu}
\author{Steven Sperber}
\address{School of Mathematics\\
University of Minnesota\\
Minneapolis, Minnesota 55455}
\email{sperber@math.umn.edu}
\date{\today}
\keywords{$A$-hypergeometric system, de Rham cohomology}
\subjclass{Primary: 33C70, 14F40; Secondary: 52B20}
\begin{abstract}
In recent work, Beukers characterized ${A}$-hypergeometric systems
having a full set of algebraic solutions.  He accomplished this by (1)
determining which ${A}$-hypergeometric systems have a full set of
solutions modulo $p$ for almost all primes $p$ and (2) showing that
these systems come from geometry.  He then applied a fundamental
theorem of N. Katz, which says that such systems have a full set of
algebraic solutions.  In this paper we establish some connections
between nonresonant $A$-hypergeometric systems and de Rham-type
complexes, which leads to a determination of which $A$-hypergeometric
systems come from geometry.  We do not use the fact that the system is
irreducible or find integral formulas for its solutions. 
\end{abstract}
\maketitle

\section{Introduction}

Let $A=\{a^{(1)},\dots,a^{(N)}\}\subseteq{\mathbb Z}^n$ with $a^{(j)} =
(a^{(j)}_1,\dots,a^{(j)}_n)$.  We shall assume throughout this paper
that these lattice points generate ${\mathbb Z}^n$ as abelian group.
Let $L$ be the corresponding lattice of relations, 
\[ L = \bigg\{(l_1,\dots,l_N)\in{\mathbb Z}^N\mid \sum_{j=1}^N l_ja^{(j)} =
0\bigg\}, \]
and let $\alpha = (\alpha_1,\dots,\alpha_n)\in{\mathbb C}^n$.  The
${A}$-{\it hypergeometric system\/} is the system of partial
differential equations in the variables $\lambda_1,\dots,\lambda_N$
consisting of the operators (we write $\partial_j$ for
$\partial/\partial \lambda_j$)
\[ \Box_l = \prod_{l_j>0}\partial_j^{l_j} -
\prod_{l_j<0}\partial_j^{-l_j} \]
for all $l\in L$ and the operators 
\[ Z_{i,\alpha} = \sum_{j=1}^N a^{(j)}_i \lambda_j\partial_j -\alpha_i \]
for $i=1,\dots,n$.  We denote by ${\mathcal D} = {\mathbb C}\langle
\lambda_1,\dots,\lambda_N,\partial_1,\dots,\partial_N\rangle$ the ring
of differential operators in the $\lambda_j$.  The associated
hypergeometric ${\mathcal D}$-module is
\[ {\mathcal M}_\alpha = {\mathcal D}\bigg/\bigg(\sum_{l\in L}{\mathcal D}\Box_l
+ \sum_{i=1}^n{\mathcal D}Z_{i,\alpha}\bigg). \]

Let ${\mathbb C}[\lambda]={\mathbb C}[\lambda_1,\dots,\lambda_N]$ be
the polynomial ring in $N$ variables and let $X$ be a smooth variety
over ${\mathbb C}[\lambda]$.  Let $G$ be a finite group acting on
$X/{\mathbb C}[\lambda]$.  Then $G$ acts on the relative de Rham
cohomology groups $H^i_{\rm DR}(X/{\mathbb C}[\lambda])$.  For an
irreducible representation $\chi$ of $G$, let $H^i_{\rm DR}(X/{\mathbb
  C}[\lambda])^\chi$ denote the $\chi$-isotypic component of $H^i_{\rm
  DR}(X/{\mathbb C}[\lambda])$, i.e., $H^i_{\rm DR}(X/{\mathbb
  C}[\lambda])^\chi$ is the sum of all $G$-submodules of $H^i_{\rm
  DR}(X/{\mathbb C}[\lambda])$ that are isomorphic to $\chi$.  The 
$H^i_{\rm DR}(X/{\mathbb C}[\lambda])^\chi$ are ${\mathcal D}$-modules
via the Gauss-Manin connection.  We say that a ${\mathcal D}$-module
${\mathcal M}$ {\it comes from geometry\/} if it is isomorphic to 
$H^i_{\rm DR}(X/{\mathbb C}[\lambda])^\chi$ for some $(X/{\mathbb
  C}[\lambda], G, \chi)$.

The main idea of this paper is to show that the ${\mathcal M}_\alpha$
are isomorphic as ${\mathcal D}$-modules to certain cohomology groups
that arise in algebraic geometry.  Let $R' = {\mathbb
  C}[\lambda][x_1^{\pm 1},\dots,x_n^{\pm 1}]$, the coordinate ring of
the $n$-torus ${\mathbb T}^n$ over ${\mathbb C}[\lambda]$.  Put
\begin{equation}
f = \sum_{j=1}^N \lambda_jx^{a^{(j)}}\in R' 
\end{equation}
and let $\Omega^k_{R'/{\mathbb C}[\lambda]}$ denote the module of
relative $k$-forms.  We use the set
\[
\bigg\{\frac{dx_{i_1}}{x_{i_1}}\wedge\cdots\wedge\frac{dx_{i_k}}{x_{i_k}}
\mid 1\leq i_1<\dots<i_k\leq n\bigg\} \]
as basis for $\Omega^k_{R'/{\mathbb C}[\lambda]}$ as $R'$-module.  The map
$\nabla_\alpha:\Omega^k_{R'/{\mathbb
    C}[\lambda]}\to\Omega^{k+1}_{R'/{\mathbb C}[\lambda]}$ given by
\[ \nabla_\alpha(\omega) = d\omega + \sum_{k=1}^n
\alpha_i\frac{dx_i}{x_i}\wedge\omega + df\wedge\omega \]
defines a complex $(\Omega^\bullet_{R'/{\mathbb
    C}[\lambda]},\nabla_\alpha)$.  In terms of the above basis, we
have for $\xi\in R'$
\begin{equation}
\nabla_\alpha\biggl(\xi\,\frac{dx_{i_1}}{x_{i_1}}\wedge\cdots\wedge
\frac{dx_{i_k}}{x_{i_k}}\biggr) = \biggl(\sum_{i=1}^n
D_{i,\alpha}(\xi)\,\frac{dx_i}{x_i}\biggr) \wedge
\frac{dx_{i_1}}{x_{i_1}}\wedge\cdots\wedge \frac{dx_{i_k}}{x_{i_k}},
\end{equation}
where
\begin{equation}
D_{i,\alpha} = x_i\frac{\partial}{\partial x_i} + \alpha_i +
x_i\frac{\partial f}{\partial x_i}.
\end{equation}

Formally,
\[ \nabla_\alpha = \frac{1}{x^\alpha\exp f}\circ d\circ x^\alpha\exp
f, \]
so for any derivation $\partial\in{\rm Der}_{\mathbb C}({\mathbb
  C}[\lambda])$ the operator
\[ D_\partial = \frac{1}{x^\alpha\exp f}\circ \partial\circ
x^\alpha\exp f = \partial + f^\partial  \]
on the $\Omega^k_{R'/{\mathbb C}[\lambda]}$ commutes with
$\nabla_\alpha$ (where $f^\partial$ denotes the polynomial obtained
from $f$ by applying $\partial$ to its coefficients).  This defines an
action of ${\rm Der}_{\mathbb C}({\mathbb 
  C}[\lambda])$ on $(\Omega^\bullet_{R'/{\mathbb
    C}[\lambda]},\nabla_\alpha)$.  In particular, $\partial_j$ acts as
$D_j = \partial_j + x^{a^{(j)}}$.  This action extends to an action of
${\mathcal D}$ on $(\Omega^\bullet_{R'/{\mathbb
    C}[\lambda]},\nabla_\alpha)$, which makes the
$H^i(\Omega^\bullet_{R'/{\mathbb C}[\lambda]},\nabla_\alpha)$ into
${\mathcal D}$-modules.

Let $C(A)\subseteq{\mathbb R}^n$ be the real cone generated by $A$ and
let $\ell_1,\dots,\ell_s\in{\mathbb Z}[u_1,\dots,u_n]$ be homogeneous
linear forms defining the codimension-one faces of $C(A)$, normalized
so that the coefficients of each $\ell_i$ are relatively prime and so
that $\ell_i\geq 0$ on $C(A)$ for each $i$.  We say that $\alpha$ is
{\it nonresonant for\/} $A$ if $\ell_i(\alpha)\not\in{\mathbb Z}$ for
all $i$.  Note that $\alpha$ is nonresonant for $A$ if and only if
$\alpha+u$ is nonresonant for $A$ for all $u\in{\mathbb Z}^n$.  (If
$\alpha\in{\mathbb R}^n$, this is equivalent to saying that no proper
face of $C(A)$ contains a point of $\alpha+{\mathbb Z}^n$.)

\begin{theorem}
If $\alpha$ is nonresonant for $A$, then ${\mathcal M}_\alpha\cong
H^n(\Omega^\bullet_{R'/{\mathbb C}[\lambda]},\nabla_\alpha)$ as
${\mathcal D}$-modules. 
\end{theorem}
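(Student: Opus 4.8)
The plan is to produce a natural $\mathcal D$-linear map $\phi\colon\mathcal M_\alpha\to H^n(\Omega^\bullet_{R'/\mathbb C[\lambda]},\nabla_\alpha)$, to show it is surjective, and then to deduce that it is an isomorphism by a rank computation over the generic point of $\operatorname{Spec}\mathbb C[\lambda]$. Since $\mathcal M_\alpha$ is cyclic, generated by the class of $1\in\mathcal D$, I would send that class to the class $\omega$ of $\frac{dx_1}{x_1}\wedge\cdots\wedge\frac{dx_n}{x_n}$ in $H^n=\Omega^n_{R'/\mathbb C[\lambda]}/\nabla_\alpha\Omega^{n-1}_{R'/\mathbb C[\lambda]}$, and then check that the defining operators $\Box_l$ and $Z_{i,\alpha}$ annihilate $\omega$. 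For $\Box_l$ this holds already on the level of cochains: acting through $\partial_j\mapsto D_j=\partial_j+x^{a^{(j)}}$ one computes $\prod_{l_j>0}D_j^{l_j}(1)=x^{\sum_{l_j>0}l_ja^{(j)}}$ and $\prod_{l_j<0}D_j^{-l_j}(1)=x^{-\sum_{l_j<0}l_ja^{(j)}}$, and these agree because $\sum_j l_ja^{(j)}=0$. For $Z_{i,\alpha}$ it is the analogous computation built on the formula for $\nabla_\alpha$ in the chosen basis and the definition of $D_{i,\alpha}$: the expression $\nabla_\alpha$ applied to $\xi$ times the wedge of the $\frac{dx_{i'}}{x_{i'}}$ with $i'\neq i$ equals $\pm\,D_{i,\alpha}(\xi)\,\frac{dx_1}{x_1}\wedge\cdots\wedge\frac{dx_n}{x_n}$, so every $D_{i,\alpha}(\xi)$ is a coboundary, and combining this with $\sum_j a^{(j)}_i\lambda_jD_j(1)=x_i\,\partial f/\partial x_i$ shows $Z_{i,\alpha}\cdot\omega=0$ in $H^n$. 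Hence $\phi$ is a well-defined morphism of $\mathcal D$-modules.

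For surjectivity, observe that $H^n$ is spanned over $\mathbb C[\lambda]$ by the classes $[x^b]$, $b\in\mathbb Z^n$, and that $\operatorname{im}\phi$ contains $[x^b]$ for every $b$ in the semigroup $\mathbb N A$ generated by $A$, since $[x^{a^{(j_1)}+\cdots+a^{(j_k)}}]=D_{j_1}\cdots D_{j_k}\cdot\omega$. For general $b$ I would use the relations $D_{i,\alpha}(x^b)\equiv 0$, i.e.\ $(b_i+\alpha_i)[x^b]\equiv-\sum_j a^{(j)}_i\lambda_j[x^{b+a^{(j)}}]$; taking integer combinations over $i$ produces, for each facet functional $\ell$ of $C(A)$, a relation of the shape $(\ell(b)+\ell(\alpha))[x^b]\equiv(\text{a combination of the }[x^{b+a^{(j)}}])$, and the nonresonance hypothesis $\ell(\alpha)\notin\mathbb Z$ guarantees $\ell(b)+\ell(\alpha)\neq 0$. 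Because $A$ generates $\mathbb Z^n$, $(b+\mathbb N A)\cap\mathbb N A\neq\emptyset$ for every $b$, so an induction driven by these relations rewrites each $[x^b]$ as a $\mathbb C[\lambda]$-linear combination of classes $[x^c]$ with $c\in\mathbb N A$. Thus $\phi$ is onto and $H^n$ is cyclic.

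Both $\mathcal M_\alpha$ and $H^n$ are holonomic $\mathcal D$-modules, so $\mathcal M_\alpha\otimes_{\mathbb C[\lambda]}\mathbb C(\lambda)$ and $H^n\otimes_{\mathbb C[\lambda]}\mathbb C(\lambda)$ are finite-dimensional $\mathbb C(\lambda)$-vector spaces, and it suffices to see that their dimensions agree and then to upgrade this to injectivity of $\phi$. For nonresonant $\alpha$ the holonomic rank of $\mathcal M_\alpha$ equals the normalized volume of $\operatorname{conv}(A\cup\{0\})$; this is part of the GKZ theory, proved by computing the initial ideal of $(\Box_l,Z_{i,\alpha})$ for a generic weight, and uses neither irreducibility nor integral representations. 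On the other side one filters $R'$ by the Newton polyhedron of $f$; the associated graded of $(\Omega^\bullet_{R'/\mathbb C[\lambda]},\nabla_\alpha)$ is a Koszul-type complex for the toric logarithmic derivatives $x_i\,\partial f/\partial x_i$, and analysing it---using that these form a regular sequence for generic $\lambda$, and nonresonance of $\alpha$ to handle the contributions of the coordinate faces of the torus---gives $H^i(\Omega^\bullet_{R'/\mathbb C[\lambda]},\nabla_\alpha)\otimes\mathbb C(\lambda)=0$ for $i<n$ and $\dim H^n\otimes\mathbb C(\lambda)$ equal to the same normalized volume. Therefore $\phi\otimes\mathbb C(\lambda)$ is a surjection of equal-dimensional vector spaces, hence bijective. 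Finally $\ker\phi$ is a $\mathbb C[\lambda]$-torsion $\mathcal D$-submodule of $\mathcal M_\alpha$; since for nonresonant $\alpha$ the holonomic rank of $\mathcal M_\alpha$ does not jump, $\mathcal M_\alpha$ has no nonzero submodule supported on a proper closed subset of $\operatorname{Spec}\mathbb C[\lambda]$, so $\ker\phi=0$ and $\phi$ is an isomorphism.

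The step I expect to be the main obstacle is the cohomology computation for $(\Omega^\bullet_{R'/\mathbb C[\lambda]},\nabla_\alpha)$---the vanishing in degrees below $n$ and the equality $\dim H^n=\operatorname{vol}(\operatorname{conv}(A\cup\{0\}))$---because this is exactly where the nonresonance hypothesis is indispensable: it is what makes the restrictions of the de Rham-type complex to the faces of $C(A)$ acyclic, via an Euler-operator argument amounting to division by the nonzero numbers $\ell(\alpha)+\mathbb Z$. Passing from a generic-point isomorphism to an isomorphism over all of $\mathbb C[\lambda]$ is a secondary issue; alternatively, one can sharpen the surjectivity argument so that $\phi$ identifies an explicit monomial $\mathbb C[\lambda]$-spanning set of $H^n$ with one of $\mathcal M_\alpha$, dispensing with the generic point.
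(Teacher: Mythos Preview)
Your route is genuinely different from the paper's. The paper introduces the semigroup ring $R=\mathbb C[\lambda][x^{a^{(1)}},\dots,x^{a^{(N)}}]\subset R'$ and its log--de~Rham subcomplex $(\Omega^\bullet_R\langle\log\rangle,\nabla_\alpha)$, identifies $\mathcal M_\alpha$ with $H^n$ of that subcomplex via a Koszul-complex description of $\mathcal M_\alpha$ (the isomorphism $\mathcal P:=\mathcal D/\sum\mathcal D\Box_l\cong R$ and its extension to complexes are quoted from \cite{A1},\,\cite{A2}), and then proves that the inclusion $(\Omega^\bullet_R\langle\log\rangle,\nabla_\alpha)\hookrightarrow(\Omega^\bullet_{R'/\mathbb C[\lambda]},\nabla_\alpha)$ is a quasi-isomorphism. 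That last step is done by filtering by the value of a single facet form $\ell_t$ and writing down an explicit contracting homotopy $\rho$ on each graded piece; nonresonance enters only to make the scalar $\ell_t(\alpha)+p$ invertible. Tracing through, the resulting map $\mathcal M_\alpha\to H^n$ is exactly your $\phi$ (it sends $1$ to the class of $\tfrac{dx_1}{x_1}\wedge\cdots\wedge\tfrac{dx_n}{x_n}$), but the paper proves bijectivity in one stroke, without any rank comparison, and in fact obtains a quasi-isomorphism of the full complexes---which is what is needed for Theorem~1.5.

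Two points in your argument need repair. First, the surjectivity induction does not terminate in $\mathbb N A$ as stated: the facet-$\ell_t$ relation only lets you add those $a^{(j)}$ with $\ell_t(a^{(j)})>0$, so iterating pushes $b$ into $C(A)\cap\mathbb Z^n$ but not automatically into the (possibly non-saturated) semigroup $\mathbb N A$; one needs the extra fact, quoted in the paper from \cite[Lemma~3.12]{A1}, that some shifted cone $\{u:\ell_i(u)\ge v_i\ \forall i\}$ lies inside $\mathbb N A$. Second, and more seriously, the inference ``holonomic rank of $\mathcal M_\alpha$ does not jump $\Rightarrow$ $\mathcal M_\alpha$ has no nonzero $\mathbb C[\lambda]$-torsion $\mathcal D$-submodule'' is not valid as written: constancy of the generic rank does not by itself exclude submodules supported on proper closed subsets. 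What you actually need is that $\mathcal M_\alpha$ is $\mathbb C[\lambda]$-torsion-free, which is true for nonresonant $\alpha$ (indeed \cite{A1} shows it is $\mathbb C[\lambda]$-free), but that is a substantive theorem, not a formality. Together with the independent computation of $\dim_{\mathbb C(\lambda)}H^n\otimes\mathbb C(\lambda)$ that you rightly flag as the main obstacle, your approach ends up importing at least as much machinery as the paper's direct homotopy argument; the alternative you mention at the end---matching explicit monomial $\mathbb C[\lambda]$-spanning sets on both sides---is essentially what the paper's Lemma~2.6 accomplishes, but at the level of the whole complex rather than just $H^n$.
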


{\bf Remark.}  It is straightforward to check that for $u\in{\mathbb
  Z}^n$ multiplication by $x^u$ defines an isomorphism of complexes of
${\mathcal D}$-modules
\[ x^u:(\Omega^\bullet_{R'/{\mathbb
    C}[\lambda]},\nabla_{\alpha+u})\to (\Omega^\bullet_{R'/{\mathbb
    C}[\lambda]},\nabla_\alpha) \]
(its inverse is multiplication by $x^{-u}$).  Thus when $\alpha$ is
nonresonant the ${\mathcal M}_{\alpha+u}$ for $u\in{\mathbb Z}^n$ are
all isomorphic as ${\mathcal D}$-modules.

Consider the special case $f = x_ng$, where $g\in{\mathbb
  C}[\lambda][x_1^{\pm 1},\dots,x_{n-1}^{\pm 1}]$.  Let ${\mathbb
  T}^{n-1}$ be the $(n-1)$-torus over ${\mathbb C}[\lambda]$ with
coordinates $x_1,\dots,x_{n-1}$.  Let $U\subseteq{\mathbb T}^{n-1}$ be
the open set where $g$ is nonvanishing and let $\Omega^k_{U/{\mathbb
    C}[\lambda]}$ be the module of relative $k$-forms over the ring of
regular functions on $U$.  The map
$\widetilde{\nabla}_\alpha:\Omega^k_{U/{\mathbb C}[\lambda]}\to
\Omega^{k+1}_{U/{\mathbb C}[\lambda]}$ defined by
\[ \widetilde{\nabla}_\alpha(\omega) = d\omega +
\sum_{i=1}^{n-1}\alpha_i\frac{dx_i}{x_i}\wedge\omega -
\alpha_n\frac{dg}{g}\wedge\omega \]
defines a complex $(\Omega^\bullet_{U/{\mathbb
    C}[\lambda]},\widetilde{\nabla}_\alpha)$.  Formally we have
\[ \widetilde{\nabla}_\alpha =
\frac{g^{\alpha_n}}{x_1^{\alpha_1}\cdots x_n^{\alpha_n}} \circ d \circ
\frac{x_1^{\alpha_1}\cdots x_n^{\alpha_n}}{g^{\alpha_n}}, \]
so if we define for $\partial\in {\rm Der}_{{\mathbb C}}({\mathbb
  C}[\lambda])$
\[ \widetilde{D}_\partial = \frac{g^{\alpha_n}}{x_1^{\alpha_1}\cdots
  x_n^{\alpha_n}} \circ \partial \circ 
\frac{x_1^{\alpha_1}\cdots x_n^{\alpha_n}}{g^{\alpha_n}}  = \partial
- \alpha_n\frac{g^\partial}{g},\]
we get an action of ${\rm Der}_{{\mathbb C}}({\mathbb C}[\lambda])$ on
this complex.  The action extends to an action of~${\mathcal D}$,
making $(\Omega^\bullet_{U/{\mathbb C}[\lambda]},\widetilde{\nabla}_\alpha)$
into a complex of ${\mathcal D}$-modules.  Note that for $u\in{\mathbb
  Z}^n$, multiplication by $x_1^{u_1}\cdots x_{n-1}^{u_{n-1}}/g^{u_n}$
defines an isomorphism of complexes of ${\mathcal D}$-modules
\[ \frac{x_1^{u_1}\cdots
  x_{n-1}^{u_{n-1}}}{g^{u_n}}:(\Omega^\bullet_{U/{\mathbb
    C}[\lambda]},\widetilde{\nabla}_{\alpha+u})\to
(\Omega^\bullet_{U/{\mathbb
    C}[\lambda]},\widetilde{\nabla}_\alpha). \] 

\begin{theorem}
Suppose $f=x_ng(x_1,\dots,x_{n-1})$ and $\alpha$ is nonresonant for $A$.
For all~$i$ there are ${\mathcal D}$-module isomorphisms 
\[ H^i(\Omega^\bullet_{R'/{\mathbb C}[\lambda]},\nabla_\alpha) \cong
H^{i-1}(\Omega^\bullet_{U/{\mathbb
    C}[\lambda]},\widetilde{\nabla}_\alpha). \]
\end{theorem}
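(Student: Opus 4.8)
The plan is to integrate out the variable $x_n$. Set $B={\mathbb C}[\lambda][x_n^{\pm 1}]$, so that $R'=B[x_1^{\pm 1},\dots,x_{n-1}^{\pm 1}]$, and let $Q^\bullet=(\Omega^\bullet_{R'/B},\nabla'_\alpha)$ be the relative de Rham complex of $R'$ over $B$ (the forms in $dx_1,\dots,dx_{n-1}$ only), with
\[ \nabla'_\alpha(\omega)=d'\omega+\sum_{i=1}^{n-1}\alpha_i\,\frac{dx_i}{x_i}\wedge\omega+d'f\wedge\omega, \]
where $d'$ is exterior differentiation over $B$; since $f=x_ng$ we have $d'f=x_n\,d'g$. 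The forms in $\Omega^\bullet_{R'/{\mathbb C}[\lambda]}$ divisible by $dx_n/x_n$ form a subcomplex $K^\bullet$ of $(\Omega^\bullet_{R'/{\mathbb C}[\lambda]},\nabla_\alpha)$, because wedging with $dx_n$ annihilates any form already containing $dx_n/x_n$; one has $K^k\cong Q^{k-1}$ and $\Omega^\bullet_{R'/{\mathbb C}[\lambda]}/K^\bullet\cong Q^\bullet$, with induced differential $\nabla'_\alpha$ in both cases. Using $x_n\,\partial f/\partial x_n=f=x_ng$, a direct calculation shows that the connecting homomorphism of the resulting long exact cohomology sequence, $H^i(Q)\to H^{i+1}(K)\cong H^i(Q)$, is induced by $D_{n,\alpha}=x_n\,\partial/\partial x_n+\alpha_n+x_ng$. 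This operator commutes with $\nabla'_\alpha$ and, because $a^{(j)}_n=1$ for all $j$, with the ${\mathcal D}$-action (one checks $[D_j,D_{n,\alpha}]=0$); hence there is a long exact sequence of ${\mathcal D}$-modules
\[ \cdots\to H^{i-1}(Q)\xrightarrow{\,D_{n,\alpha}\,}H^{i-1}(Q)\to H^i(\Omega^\bullet_{R'/{\mathbb C}[\lambda]},\nabla_\alpha)\to H^i(Q)\xrightarrow{\,D_{n,\alpha}\,}H^i(Q)\to\cdots. \]

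Thus the theorem reduces to the following claim: for every $i$, $D_{n,\alpha}$ acts injectively on $H^i(Q)$ and its cokernel is isomorphic, as a ${\mathcal D}$-module, to $H^i(\Omega^\bullet_{U/{\mathbb C}[\lambda]},\widetilde\nabla_\alpha)$. Indeed, injectivity of $D_{n,\alpha}$ on each $H^i(Q)$ makes the long exact sequence collapse to isomorphisms $H^i(\Omega^\bullet_{R'/{\mathbb C}[\lambda]},\nabla_\alpha)\cong\mathrm{coker}\bigl(D_{n,\alpha}\colon H^{i-1}(Q)\to H^{i-1}(Q)\bigr)\cong H^{i-1}(\Omega^\bullet_{U/{\mathbb C}[\lambda]},\widetilde\nabla_\alpha)$. (Since $Q^\bullet$ has length $n-1$, under nonresonance $H^i(Q)$ should vanish for $i\ne n-1$, so that all the content sits in degree $n-1$; this is convenient but need not be used.)

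To prove the claim I would read $H^\bullet(Q)$ as the relative twisted de Rham cohomology of the torus ${\mathbb T}^{n-1}$ over $B$ with coefficients $x_1^{\alpha_1}\cdots x_{n-1}^{\alpha_{n-1}}\exp(x_ng)$, and $H^\bullet(\Omega^\bullet_{U/{\mathbb C}[\lambda]},\widetilde\nabla_\alpha)$ as the twisted de Rham cohomology of $U$ with coefficients $x_1^{\alpha_1}\cdots x_{n-1}^{\alpha_{n-1}}g^{-\alpha_n}$; the operator $D_{n,\alpha}$ is $x_n\,\partial/\partial x_n$ conjugated by $x_n^{\alpha_n}\exp(x_ng)$, and passing to its cokernel is the algebraic shadow of the Euler integral $\int_0^\infty t^{\alpha_n}e^{-t}\,dt/t=\Gamma(\alpha_n)$, i.e.\ of $\int x_n^{\alpha_n}\exp(x_ng)\,dx_n/x_n\sim g^{-\alpha_n}$. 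Concretely, the identity $D_{n,\alpha}(x_n^m\xi\,\omega')=(m+\alpha_n)x_n^m\xi\,\omega'+x_n^{m+1}g\xi\,\omega'$ (for $\xi\in R'$ and $\omega'$ a form in $dx_1,\dots,dx_{n-1}$) shows that in $\mathrm{coker}(D_{n,\alpha})$ the class of $x_n^m$ may be replaced by $(-1)^m(\alpha_n)_m\,g^{-m}$, which yields a surjection $H^\bullet(Q)\to H^\bullet(\Omega^\bullet_{U/{\mathbb C}[\lambda]},\widetilde\nabla_\alpha)$ with kernel exactly $D_{n,\alpha}(H^\bullet(Q))$. Injectivity of $D_{n,\alpha}$ on $H^\bullet(Q)$ comes from a support argument: if $\sum_m\xi_mx_n^m\in R'$ has $D_{n,\alpha}$-image a $\nabla'_\alpha$-coboundary, the recursion $\xi_m=-g\xi_{m-1}/(m+\alpha_n)$ forces infinitely many nonzero $\xi_m$ unless $\sum_m\xi_mx_n^m$ was itself a coboundary. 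Finally, writing $a^{(j)}=(a^{(j)\prime},1)$, the substitution $x_n\mapsto-\alpha_n/g$ intertwines $D_j=\partial_j+x^{a^{(j)}}=\partial_j+x_n x^{a^{(j)\prime}}$ with $\widetilde D_j=\partial_j-\alpha_n x^{a^{(j)\prime}}/g$, so the isomorphism is ${\mathcal D}$-linear.

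The main obstacle is carrying out this last argument on cohomology rather than on cochains, and in particular the case $\alpha_n\in{\mathbb Z}$, which nonresonance of $\alpha$ for $A$ does not exclude. In that case division by $g$ is illegitimate at the cochain level — already $D_{n,\alpha}(1)=x_ng$ shows that the cokernel of $D_{n,\alpha}$ on $R'$ itself has $g$-torsion, with $g\cdot[x_n]=0$ but $[x_n]\ne 0$ — so one must prove that this torsion becomes a $\nabla'_\alpha$-coboundary after passing to $H^\bullet(Q)$. This is precisely where nonresonance is used: the nonresonance of $\alpha$ for $A$ is equivalent to the nonresonance needed to control the top cohomology of both $Q^\bullet$ and $(\Omega^\bullet_{U/{\mathbb C}[\lambda]},\widetilde\nabla_\alpha)$ (the codimension-one faces of $C(A)$ encode the facets of the Newton polytope of $g$ together with the residue direction along $g=0$), and the structure of these top cohomology groups, obtained by the techniques used to prove the first theorem, is what forces the torsion to disappear. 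Checking that every map in sight respects the ${\mathcal D}$-module structure, and is compatible with the twists $x^u$ of the Remark under $\alpha\mapsto\alpha+u$, is routine but must be done.
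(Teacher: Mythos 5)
Your setup (splitting off the $dx_n/x_n$-direction, identifying the connecting map with $D_{n,\alpha}=x_n\partial/\partial x_n+\alpha_n+x_ng$, and trading $x_n^m$ for $(-1)^m(\alpha_n)_m g^{-m}$) is essentially the skeleton of the paper's argument, but the proof is not complete: the entire content of the theorem sits in the claim that $D_{n,\alpha}$ is injective on $H^i(Q)$ with cokernel $H^i(\Omega^\bullet_{U/{\mathbb C}[\lambda]},\widetilde\nabla_\alpha)$, and you do not prove it. Your injectivity "support argument" works at the level of elements of $R'$, not cohomology classes: what must be shown is that $D_{n,\alpha}\xi=\nabla'_\alpha\eta$ forces $\xi$ to be a coboundary, and the recursion $\xi_m=-g\xi_{m-1}/(m+\alpha_n)$ neither interacts with $\eta$ nor makes sense when $m+\alpha_n=0$. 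Likewise the identification of the cokernel is only heuristic (the Euler-integral picture), and even in the generic case $\alpha_n\notin{\mathbb Z}$ the kernel computation is a genuine step (it is the content of Proposition 3.4 of the paper, which defers to [AS1, Lemma 2.5]). Most importantly, you correctly observe that nonresonance for $A$ does not exclude $\alpha_n\in{\mathbb Z}$, and that in this case the substitution $x_n^m\mapsto(-1)^m(\alpha_n)_mg^{-m}$ breaks down on negative powers of $x_n$ (vanishing or undefined Pochhammer factors, $g$-torsion in the naive cokernel) — but you leave precisely this case open, with only an appeal to "the techniques used to prove the first theorem."

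The missing idea is the paper's two-step reduction, which disposes of exactly this difficulty. First, the $x^u$-twist isomorphism of the Remark after Theorem 1.4 lets one assume $\alpha_n\notin{\mathbb Z}_{\leq 0}$ (if $\alpha_n\in{\mathbb Z}$, arrange $\alpha_n\geq 1$). Second — and this is where nonresonance actually enters Theorem 1.5 — Proposition 2.5 (the filtration/homotopy argument of Lemma 2.6, applied to $U=\{u:u_n\geq 0\}$, which satisfies (2.4) since every $a^{(j)}$ has last coordinate $1$) replaces $(\Omega^\bullet_{R'/{\mathbb C}[\lambda]},\nabla_\alpha)$ by the quasi-isomorphic subcomplex $\Omega^\bullet_{R_+}\langle\log\rangle$ with only nonnegative powers of $x_n$. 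On $R_+$ the vertical differential $\partial_v$ (your $D_{n,\alpha}$ in the $x_n$-variable) is injective already at the cochain level because only $m\geq 0$ occurs and $m+\alpha_n\neq 0$, so no long exact sequence or injectivity-on-cohomology argument is needed: the total complex is quasi-isomorphic to $(\Omega^{\bullet,1}/\partial_v\Omega^{\bullet,0})[-1]$, and the explicit map $\gamma$ (your substitution, now involving only $(\alpha_n)_m$ with $m\geq 0$, all nonzero) is shown by an elementary induction to be surjective with kernel exactly $\partial_v\Omega^{\bullet,0}$. Without this reduction, your plan as written cannot handle integral $\alpha_n$; with it, your long-exact-sequence machinery becomes unnecessary. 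So the proposal is the right picture but has a genuine gap at its central claim.
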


When $\alpha\in{\mathbb Q}^n$, it is well known that the
$H^i(\Omega^\bullet_{U/{\mathbb
    C}[\lambda]},\widetilde{\nabla}_\alpha)$ come from geometry.  (We
sketch the proof of this fact in Section~4.)  
From Theorems~1.4 and~1.5, we then get the following result.

\begin{corollary}
Suppose $f=x_ng(x_1,\dots,x_{n-1})$ and $\alpha$ is nonresonant for
$A$.  There is an isomorphism of ${\mathcal D}$-modules ${\mathcal
  M}_\alpha\cong H^{n-1}(\Omega^\bullet_{U/{\mathbb
    C}[\lambda]},\widetilde{\nabla}_\alpha)$.  If in addition
$\alpha\in{\mathbb Q}^n$, then ${\mathcal M}_\alpha$ comes from
geometry. 
\end{corollary}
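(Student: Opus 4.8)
The first assertion will follow immediately from Theorems~1.4 and~1.5: taking $i=n$ in Theorem~1.5 and composing with the isomorphism of Theorem~1.4 gives ${\mathcal D}$-module isomorphisms
\[ {\mathcal M}_\alpha\;\cong\;H^n(\Omega^\bullet_{R'/{\mathbb C}[\lambda]},\nabla_\alpha)\;\cong\;H^{n-1}(\Omega^\bullet_{U/{\mathbb C}[\lambda]},\widetilde{\nabla}_\alpha). \]
It then remains to show that, when $\alpha\in{\mathbb Q}^n$, the right-hand side comes from geometry; this is the ``well known'' fact stated above, and the plan is to prove it by realizing $H^{n-1}(\Omega^\bullet_{U/{\mathbb C}[\lambda]},\widetilde{\nabla}_\alpha)$ as an isotypic summand of the de Rham cohomology of a cyclic cover of $U$.

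Concretely, I would pick a positive integer $m$ with $m\alpha\in{\mathbb Z}^n$, put $a_i=m\alpha_i$, and note that $h:=x_1^{a_1}\cdots x_{n-1}^{a_{n-1}}g^{-a_n}$ is a unit on $U$. Then $V={\rm Spec}\bigl({\mathcal O}(U)[t]/(t^m-h)\bigr)$ is finite \'etale over $U$ of degree $m$ (indeed $t$ is a unit on $V$ and $mt^{m-1}$ is invertible), hence smooth over ${\mathbb C}[\lambda]$, and $G=\mu_m$ acts on $V$ over ${\mathbb C}[\lambda]$ by $\sigma\cdot t=\zeta_m t$ for a fixed primitive $m$-th root of unity $\zeta_m$. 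Writing $\pi\colon V\to U$ for the projection and using that $V/U$ is \'etale together with $dt=\frac{t}{m}\,\frac{dh}{h}$, one obtains a decomposition into subcomplexes $\pi_*\Omega^\bullet_{V/{\mathbb C}[\lambda]}=\bigoplus_{k=0}^{m-1}t^k\Omega^\bullet_{U/{\mathbb C}[\lambda]}$, the differential carrying $t^k\omega$ to $t^k\bigl(d\omega+\frac{k}{m}\frac{dh}{h}\wedge\omega\bigr)$. Since $\frac{1}{m}\frac{dh}{h}=\sum_{i=1}^{n-1}\alpha_i\frac{dx_i}{x_i}-\alpha_n\frac{dg}{g}$, the $k=1$ summand is precisely $(\Omega^\bullet_{U/{\mathbb C}[\lambda]},\widetilde{\nabla}_\alpha)$, on which $\sigma$ acts by $\zeta_m$; it is therefore the $\chi$-isotypic component for the character $\chi$ of $\mu_m$ with $\chi(\sigma)=\zeta_m$. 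Passing to cohomology yields $H^{n-1}(\Omega^\bullet_{U/{\mathbb C}[\lambda]},\widetilde{\nabla}_\alpha)\cong H^{n-1}_{\rm DR}(V/{\mathbb C}[\lambda])^\chi$.

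The only point needing genuine care is that this is an isomorphism of ${\mathcal D}$-modules: on the left the ${\mathcal D}$-action is the one built from the operators $\widetilde{D}_\partial=\partial-\alpha_n g^\partial/g$, while on the right it is the Gauss-Manin connection of $V/{\mathbb C}[\lambda]$. I would check the match by differentiating the relation $t^m=h$ along $\partial\in{\rm Der}_{\mathbb C}({\mathbb C}[\lambda])$, which gives $\partial t=-\alpha_n(g^\partial/g)\,t$; substituting this into the Gauss-Manin operator on $\bigoplus_k t^k\Omega^\bullet_{U/{\mathbb C}[\lambda]}$ and projecting onto the $k=1$ summand returns exactly $\widetilde{D}_\partial$, so the decomposition above, and hence the cohomology isomorphism, is one of complexes of ${\mathcal D}$-modules. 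The main obstacle is thus organizational rather than conceptual: propagating the Gauss-Manin connection through the direct-sum decomposition and identifying the $k=1$ component with the prescribed $\widetilde{D}_\partial$ (and observing that $V$ need not be connected, which is harmless if ``smooth variety'' is read as ``smooth scheme of finite type over ${\mathbb C}[\lambda]$'', or one argues on connected components, the isotypic decomposition being insensitive to this). Combining with the first paragraph, ${\mathcal M}_\alpha\cong H^{n-1}_{\rm DR}(V/{\mathbb C}[\lambda])^\chi$, so ${\mathcal M}_\alpha$ comes from geometry, with $X=V$, $G=\mu_m$, $\chi$ as above, and $i=n-1$.
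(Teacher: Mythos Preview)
Your argument is correct. The first assertion is exactly how the paper derives it (composing Theorems~1.4 and~1.5 at $i=n$), and your Kummer-cover argument for the second assertion is sound: the decomposition of $\pi_*\Omega^\bullet_{V/\mathbb{C}[\lambda]}$, the identification of the $k=1$ summand with $(\Omega^\bullet_{U/\mathbb{C}[\lambda]},\widetilde{\nabla}_\alpha)$, and the Gauss-Manin check all go through as you describe.

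The paper's own construction in Section~4 is different: it takes $X\subseteq\mathbb{T}^n_{\mathbb{C}[\lambda]}$ to be the hypersurface $x_n^D g(x_1^D,\dots,x_{n-1}^D)-1=0$ for a common denominator $D$, with the full group $(\boldsymbol{\mu}_D)^n$ acting by scaling the coordinates, and identifies $(\Omega^\bullet_{U/\mathbb{C}[\lambda]},\widetilde{\nabla}_\alpha)$ with the $\chi$-isotypic subcomplex for the character indexed by $(D\alpha_1,\dots,D\alpha_n)$. So the paper builds an abelian cover of $U$ by extracting $D$-th roots of all the coordinates (and of $g$, via $x_n$) separately, whereas you build a single cyclic $\mu_m$-cover by extracting an $m$-th root of the combined unit $h=x_1^{a_1}\cdots x_{n-1}^{a_{n-1}}g^{-a_n}$. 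Your route is a bit leaner---one equation, one cyclic group, and the $\mathcal{D}$-module compatibility drops out of the single relation $\partial t=-\alpha_n(g^\partial/g)t$---while the paper's has the virtue of presenting $X$ by an explicit polynomial equation in a torus, which feeds more directly into the later reduction-mod-$p$ and monodromy arguments via base change. Either construction establishes the corollary.
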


The proofs of Theorems 1.4 and 1.5 are based on ideas from \cite{A1}
and \cite{AS1}.  Those papers in turn are related to earlier work of
Dwork, Dwork-Loeser, and N. Katz.  We refer the reader to the
introductions of \cite{A1} and \cite{AS1} for more details on the connections
with that earlier work.

\section{Proof of Theorem 1.4}

It is straightforward to check that the $Z_{i,\alpha}$ commute with one another
and that 
\[ \Box_l\circ Z_{i,\alpha} = Z_{i,\beta}\circ\Box_l, \]
where $\beta = \alpha + \sum_{l_j>0} l_ja^{(j)}$ ($ =
\alpha-\sum_{l_j<0} l_ja^{(j)}$ since $l\in L$).  It follows that right
multiplication by $Z_{i,\alpha}$ maps the left ideal $\sum_{l\in
  L}{\mathcal D}\Box_l$ into itself.  If we put ${\mathcal
  P} = {\mathcal D}/\sum_{l\in L}{\mathcal D}\Box_l$, then right
multiplication by the $Z_{i,\alpha}$ is a family of commuting
endomorphisms of ${\mathcal P}$ as left ${\mathcal D}$-module.
Let ${\mathcal C}^\bullet$ be the cohomological Koszul complex
on ${\mathcal P}$ defined by the $Z_{i,\alpha}$.  Concretely,
\[ {\mathcal C}^k = \bigoplus_{1\leq i_1<\dots<i_k\leq n} {\mathcal
  P}\,e_{i_1}\wedge\cdots\wedge e_{i_k}, \]
where the $e_i$ are formal symbols satisfying $e_i\wedge
e_j = -e_j\wedge e_i$ and the boundary operator
$\delta_\alpha:{\mathcal C}^k\to{\mathcal C}^{k+1}$ is defined by additivity
and the formula (for $\sigma\in{\mathcal P}$)
\[ \delta_\alpha(\sigma\,e_{i_1}\wedge\cdots\wedge e_{i_k}) = \sum_{i=1}^n
\sigma Z_{i,\alpha}\,e_i\wedge e_{i_1}\wedge\cdots\wedge e_{i_k}. \]
One obtains a complex of left ${\mathcal D}$-modules $({\mathcal
  C}^\bullet,\delta_\alpha)$ for which 
\begin{equation}
H^n({\mathcal C}^\bullet,\delta_\alpha) = {\mathcal M}_\alpha.
\end{equation}

Let $R = {\mathbb C}[\lambda][x^{a^{(1)}},\dots,x^{a^{(N)}}]$, 
a subring of $R'$ which is also a ${\mathcal D}$-submodule of~$R'$,
and let 
\[ \Omega^k_R\langle\log\rangle = \bigg\{ \sum_{1\leq
  i_1<\dots<i_k\leq n}\xi_{i_1\dots i_k}\,\frac{dx_{i_1}}{x_{i_1}}
\wedge\cdots\wedge\frac{dx_{i_k}}{x_{i_k}} \;\bigg|\; \xi_{i_1\dots
  i_k}\in R\bigg\}\subseteq \Omega^k_{R'/{\mathbb C}[\lambda]}. \]
By (1.2) and (1.3), one has
$\nabla_\alpha(\Omega^k_R\langle\log\rangle) \subseteq
\Omega^{k+1}_R\langle\log\rangle$, so this defines a subcomplex
$(\Omega^\bullet_R\langle\log\rangle,\nabla_\alpha)$ of
$(\Omega^\bullet_{R'/{\mathbb C}[\lambda]},\nabla_\alpha)$.  

The ${\mathbb C}[\lambda]$-module homomorphism $\phi:{\mathcal P}\to
R$ defined by 
\[ \phi(\partial_1^{b_1}\cdots \partial_N^{b_N}) =
x^{\sum_{j=1}^N b_j{a^{(j)}}} \]
is an isomorphism of ${\mathcal
  D}$-modules by \cite[Theorem~4.4]{A1}.  It extends to a map
$\phi:{\mathcal C}^k\to \Omega^k_R\langle\log\rangle$ by
additivity and the formula 
\[ \phi(\sigma\,e_{i_1}\wedge\cdots\wedge e_{i_k}) =
\phi(\sigma)\,\frac{dx_{i_1}}{x_{i_1}}\wedge\cdots\wedge
\frac{dx_{i_k}}{x_{i_k}}. \]
By \cite[Corollary~2.4]{A2}, this is an isomorphism of complexes of
${\mathcal D}$-modules:
\begin{equation}
\phi:({\mathcal C}^\bullet,\delta_\alpha)\xrightarrow{\cong}
(\Omega^\bullet_R\langle\log\rangle,\nabla_\alpha). 
\end{equation}
By (2.1) and (2.2), Theorem 1.4 is a consequence of the following
result.

\begin{proposition}
If $\alpha$ is nonresonant for $A$, then the inclusion map
\[ (\Omega^\bullet_R\langle\log\rangle,\nabla_\alpha)\hookrightarrow
(\Omega^\bullet_{R'/{\mathbb C}[\lambda]},\nabla_\alpha) \]
is a quasi-isomorphism of complexes of ${\mathcal D}$-modules.
\end{proposition}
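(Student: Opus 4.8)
The strategy is to filter both complexes by the Newton polytope of $f$ and compare the associated graded pieces, where the differential $\nabla_\alpha$ degenerates to an explicit "Koszul-type" map involving only the monomials $x^{a^{(j)}}$ and the scalars $\alpha_i$. First I would fix, on $R'=\mathbb C[\lambda][x^{\pm1}]$, the natural $\mathbb Z^n$-grading by the exponent of $x$ (the $\lambda_j$ being scalars), and on $R$ the induced grading; the degrees occurring in $R$ are exactly the lattice points of the cone $C(A)$. The operator $D_{i,\alpha}$ from (1.3) acts on the degree-$u$ piece $\mathbb C[\lambda]\,x^u$ of $R'$ by $x^u\mapsto (u_i+\alpha_i)x^u + \sum_j \lambda_j a^{(j)}_i\,x^{u+a^{(j)}}$, so $\nabla_\alpha$ is filtered for the filtration by $C(A)$-degree (more precisely: raising by a convex-cone amount), and on the top graded quotient it becomes multiplication by the linear form $\sum_i(u_i+\alpha_i)\,dx_i/x_i\wedge(-)$ on each degree, i.e. a direct sum over $u$ of Koszul complexes on the scalars $(u_1+\alpha_1,\dots,u_n+\alpha_n)$.

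The key point is then: for which $u\in\mathbb Z^n$ is this Koszul complex acyclic? A Koszul complex on scalars over $\mathbb C[\lambda]$ is acyclic (in fact has cohomology only in top degree, $= \mathbb C[\lambda]$, when the scalars all vanish) precisely when some $u_i+\alpha_i\neq 0$; and it is \emph{exact}, contributing nothing to cohomology, exactly when $(u_1+\alpha_1,\dots,u_n+\alpha_n)\neq(0,\dots,0)$. Since $\alpha$ nonresonant forces $\alpha\notin\mathbb Z^n$ (indeed each $\ell_i(\alpha)\notin\mathbb Z$), we have $u+\alpha\neq 0$ for \emph{all} $u\in\mathbb Z^n$, so every such graded piece is exact. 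The cokernel complex $(\Omega^\bullet_{R'/\mathbb C[\lambda]}/\Omega^\bullet_R\langle\log\rangle,\nabla_\alpha)$ is built from the degrees $u$ \emph{outside} $C(A)$, and I would show it carries an exhaustive, separated filtration whose graded pieces are exactly these exact scalar Koszul complexes; concretely, order the Laurent monomials by "distance from $C(A)$" using one (or a positive combination) of the facet forms $\ell_i$, noting $\ell_i(u)<0$ for $u\notin C(A)$ while $\nabla_\alpha$ only increases $\ell_i$-value by nonnegative integers. A standard spectral-sequence / direct-limit argument then gives that the cokernel complex is acyclic, which is equivalent to the inclusion being a quasi-isomorphism.

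One subtlety to handle carefully: $R'/R$ is an infinite direct sum of graded pieces, and the filtration by a single $\ell_i$ has infinitely many steps, so I must check the convergence hypotheses — each fixed total-degree strand of $\Omega^\bullet_{R'}/\Omega^\bullet_R\langle\log\rangle$ is not finite-dimensional over $\mathbb C[\lambda]$, but the filtration is exhaustive and bounded-below on each graded component, and $\nabla_\alpha$ is \emph{locally finite} in the appropriate sense (it moves a monomial $x^u$ to a finite set of monomials $x^{u+a^{(j)}}$), so the associated spectral sequence converges. Finally, everything in sight — the grading, the filtration, the maps $D_j=\partial_j+x^{a^{(j)}}$ and the full $\mathcal D$-action — is compatible with the $\mathcal D$-module structure (the $\partial_j$ shift degree by $a^{(j)}$, matching the shift of $f$), so the quasi-isomorphism is automatically one of complexes of $\mathcal D$-modules.

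The \textbf{main obstacle} I anticipate is the bookkeeping of the filtration on the infinite-dimensional quotient $\Omega^\bullet_{R'}\langle\log\rangle/\Omega^\bullet_R\langle\log\rangle$: choosing a single functional (or positive rational combination of the $\ell_i$) that simultaneously (i) is bounded below on each fixed graded strand of the quotient complex, (ii) is strictly increased by $\nabla_\alpha$ only through nonnegative integer jumps, and (iii) detects \emph{every} monomial outside $C(A)$ — together with verifying that the resulting spectral sequence genuinely converges rather than merely having exact $E_1$-page. Once that framework is set up, the vanishing on $E_1$ is the easy Koszul computation enabled by nonresonance.
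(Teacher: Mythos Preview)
Your plan has a genuine gap, and it is not just the convergence bookkeeping you flag at the end.

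For the associated graded of your filtration to reduce to the \emph{scalar} Koszul complex on $(u_1+\alpha_1,\dots,u_n+\alpha_n)$, you need every $a^{(j)}$ to strictly increase the filtration level, i.e.\ $\ell(a^{(j)})>0$ for all~$j$. A single facet form $\ell_i$ never does this (the $a^{(j)}$ lying on that facet have $\ell_i(a^{(j)})=0$ and survive to the graded piece), so you are forced to use a strictly positive combination $\ell=\sum c_i\ell_i$. But then your argument proves too much: the very same filtration and the very same $E_1$-vanishing apply to the \emph{full} complex $(\Omega^\bullet_{R'/\mathbb C[\lambda]},\nabla_\alpha)$, and would show it is acyclic---contradicting Theorem~1.4, which identifies its $H^n$ with ${\mathcal M}_\alpha\neq 0$. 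So the spectral sequence with that filtration genuinely fails to converge, and there is no reason it should behave better on the quotient (which, incidentally, is supported on $\mathbb Z^n\setminus U_0$, not on $\mathbb Z^n\setminus C(A)$; when the semigroup $U_0$ is not saturated the quotient contains lattice points arbitrarily deep inside the cone, so the $\ell$-filtration on it is unbounded above as well). A second warning sign: your argument only consumes the consequence $\alpha\notin\mathbb Z^n$ of nonresonance, whereas the statement needs the full force of $\ell_i(\alpha)\notin\mathbb Z$ for each facet.

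The paper's proof avoids this trap by a different mechanism. It does \emph{not} try to make the graded pieces into scalar Koszul complexes. Instead it compares $R$ and $R'$ through an intermediate set $W(v,S)=\{u:\ell_i(u)\ge v_i\text{ for all }i\}$ contained in $U_0$, and proves (Lemma~2.6) that each inclusion $M_{W(v,T)}\hookrightarrow M_U$ is a quasi-isomorphism by adding one facet constraint at a time. For the single new facet $\ell_t$, the quotient complex carries the filtration by $\ell_t$-value, which is \emph{bounded above} by $v_t$ (so convergence is automatic), and an explicit contracting homotopy $\rho$ given by interior product with the coefficient vector of $\ell_t$. The point is that $\nabla_\alpha\rho+\rho\nabla_\alpha$ acts on $x^u$ as $\sum_i c_iD_{i,\alpha}(x^u)=\ell_t(\alpha+u)\,x^u+\sum_j\lambda_j\ell_t(a^{(j)})x^{u+a^{(j)}}$; the second sum lands in strictly higher filtration because the terms with $\ell_t(a^{(j)})=0$ drop out. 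Hence on $\mathrm{gr}_p$ the homotopy identity is multiplication by the scalar $\ell_t(\alpha)+p$, nonzero precisely by nonresonance at the facet~$\ell_t$. This is where the full hypothesis enters, and it is the step your outline is missing.
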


We state and prove a generalization of Proposition 2.3.
Let $U\subseteq{\mathbb Z}^n$ be a nonempty subset satisfying the
condition: 
\begin{equation}
\text{if $u\in U$, then $u+a^{(j)}\in U$ for $j=1,\dots,N$.}
\end{equation}
If we denote by $M_U$ the free ${\mathbb C}[\lambda]$-module with basis
$\{x^u\mid u\in U\}$, then (2.4) implies that $M_U$ is both an
$R$-submodule and a ${\mathcal D}$-submodule of $R'$.  Note that
$R' = M_{{\mathbb Z}^n}$ and that $R=M_{U_0}$ where $U_0 =
\{\sum_{j=1}^N c_ja^{(j)}\mid c_j\in{\mathbb Z}_{\geq 0}\}$.
Let $\Omega^k_{M_U}\langle\log\rangle\subseteq \Omega^k_{R'/{\mathbb
    C}[\lambda]}$ be the subset
\[ \Omega^k_{M_U}\langle\log\rangle = \bigg\{\sum_{1\leq
  i_1<\dots<i_k\leq n}\xi_{i_1\dots i_k}\,\frac{dx_{i_1}}{x_{i_1}}
\wedge\cdots\wedge\frac{dx_{i_k}}{x_{i_k}} \;\bigg|\; \xi_{i_1\dots
  i_k}\in M_U\bigg\}. \]
By (1.2) and (1.3), we have
$\nabla_\alpha(\Omega^k_{M_U}\langle\log\rangle)\subseteq
\Omega^{k+1}_{M_U}\langle\log\rangle$, so we get a subcomplex
$(\Omega^\bullet_{M_U}\langle\log\rangle,\nabla_\alpha)$ of 
$(\Omega^\bullet_{R'/{\mathbb C}[\lambda]},\nabla_\alpha)$.
Proposition 2.3 is the special case $U=U_0$ of the following more 
general result.  

\begin{proposition}
If $\alpha$ is nonresonant for $A$ and $U$ satisfies $(2.4)$, then the
inclusion 
\[
(\Omega^\bullet_{M_U}\langle\log\rangle,\nabla_\alpha)\hookrightarrow
(\Omega^\bullet_{R'/{\mathbb C}[\lambda]},\nabla_\alpha) \]
is a quasi-isomorphism of complexes of ${\mathcal D}$-modules.
\end{proposition}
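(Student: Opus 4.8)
The plan is to reduce Proposition 2.5 to the acyclicity of a single quotient complex. By $(2.4)$ the submodule $M_U\subseteq R'$ is preserved by $\nabla_\alpha$, so
\[
\mathcal{Q}^\bullet \;:=\; \bigl(\Omega^\bullet_{R'/{\mathbb C}[\lambda]}\big/\Omega^\bullet_{M_U}\langle\log\rangle,\ \nabla_\alpha\bigr)
\]
is a complex of ${\mathcal D}$-modules, and the inclusion in the proposition is a quasi-isomorphism of complexes of ${\mathcal D}$-modules precisely when $\mathcal{Q}^\bullet$ is acyclic. Concretely, the degree-$k$ term of $\mathcal{Q}^\bullet$ has the classes of the $\frac{dx_{i_1}}{x_{i_1}}\wedge\cdots\wedge\frac{dx_{i_k}}{x_{i_k}}$ as a basis over $R'/M_U$, and $R'/M_U$ is the free ${\mathbb C}[\lambda]$-module on $\{x^v\mid v\in W\}$, $W:={\mathbb Z}^n\setminus U$; by $(1.2)$–$(1.3)$, $\mathcal{Q}^\bullet$ is the Koszul complex on $R'/M_U$ built from the commuting endomorphisms $\overline D_{1,\alpha},\dots,\overline D_{n,\alpha}$ induced by the $D_{i,\alpha}$ of $(1.3)$. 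Note that since $\alpha$ is nonresonant, $\alpha\notin{\mathbb Z}^n$, so $v+\alpha\neq 0$ for every $v\in{\mathbb Z}^n$.

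The key structural step is a filtration of $\mathcal{Q}^\bullet$ by a facet of $C(A)$. Fix $\ell_k$; writing $\ell_k=\sum_i \ell_{k,i}\,u_i$, the operator $\overline D_{i,\alpha}$ maps $x^v$ to $(v_i+\alpha_i)x^v+\sum_j a^{(j)}_i\lambda_j x^{v+a^{(j)}}$, and since $\ell_k(a^{(j)})\geq 0$ for all $j$, none of these operators decreases the value $\ell_k(v)$. Hence the spans of $\{x^v\mid v\in W,\ \ell_k(v)\geq m\}$ form a decreasing filtration of $\mathcal{Q}^\bullet$ by subcomplexes, with $m$-th graded piece supported on $\{x^v\mid v\in W,\ \ell_k(v)=m\}$ and differential obtained from the $\overline D_{i,\alpha}$ by discarding the shifts by those $a^{(j)}$ with $\ell_k(a^{(j)})>0$. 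On this graded piece the Euler-type combination $\sum_i \ell_{k,i}\overline D_{i,\alpha}$ sends $x^v$ to $\ell_k(v+\alpha)\,x^v+\sum_{\ell_k(a^{(j)})=0}\ell_k(a^{(j)})\lambda_j x^{v+a^{(j)}}=(m+\ell_k(\alpha))\,x^v$, because $\ell_k$ vanishes on $F_k$. By nonresonance $\ell_k(\alpha)\notin{\mathbb Z}$, so $m+\ell_k(\alpha)\neq 0$ for all $m\in{\mathbb Z}$. Since a Koszul complex on which some ${\mathbb C}$-linear combination $\sum c_i T_i$ of the defining operators acts invertibly is contractible — with homotopy $(\sum c_i T_i)^{-1}$ composed with contraction against $(c_i)$ — every graded piece of the $\ell_k$-filtration is acyclic. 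This is exactly where nonresonance is used.

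It remains to pass from acyclicity of all these graded pieces to acyclicity of $\mathcal{Q}^\bullet$, and here one cannot argue formally, because the $\ell_k$-filtration is infinite (it is bounded on only one side and $R'/M_U$ is not complete for it). I would instead run a dévissage that introduces the facets one at a time. For integers $m_1,\dots,m_s$ set $U^{[j]}:=U\cup\{v\in W\mid \ell_k(v)\geq m_k\text{ for some }k\leq j\}$; each $U^{[j]}$ satisfies $(2.4)$, $U^{[0]}=U$, and there are short exact sequences of complexes $0\to M_{U^{[j]}}/M_{U^{[j-1]}}\to\mathcal{Q}^\bullet_{U^{[j-1]}}\to\mathcal{Q}^\bullet_{U^{[j]}}\to0$. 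For the top set $U^{[s]}$ the complement is $\{v\in W\mid \ell_k(v)<m_k\ \forall k\}$, on which $\sum_k\ell_k$ is bounded above; filtering by $\sum_k\ell_k$ gives an exhaustive filtration whose graded pieces have no surviving shift terms (when $C(A)$ is pointed no $a^{(j)}$ lies in $\bigcap_kF_k=\{0\}$; the non-pointed case is reduced to this by splitting off the lineality), hence are direct sums of Koszul complexes over ${\mathbb C}[\lambda]$ having a nonzero constant among their defining elements, hence contractible. Exhibiting $\mathcal{Q}^\bullet_{U^{[s]}}$ as a filtered colimit of subcomplexes on which this filtration is finite then shows $\mathcal{Q}^\bullet_{U^{[s]}}$ is acyclic. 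One then descends from $U^{[s]}$ to $U^{[0]}=U$ through the above short exact sequences, analysing each "slab" $M_{U^{[j]}}/M_{U^{[j-1]}}$ by its own facet filtrations together with a colimit argument to eliminate the remaining infinite filtrations.

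The step I expect to be the main obstacle is exactly this last one: assembling the individually well-understood, contractible graded pieces of the facet filtrations into a proof that $\mathcal{Q}^\bullet_U$ is acyclic for an arbitrary $U$. Because the relevant filtrations are bounded on one side only and the coefficient modules are not complete for them, the naive implication "all graded pieces acyclic $\Rightarrow$ complex acyclic" is false, so the descent must be carried out with care, tracking the combinatorics of $W$ against the face lattice of $C(A)$ and reducing, slab by slab, to situations where the ambient filtration has become finite.
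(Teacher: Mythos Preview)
Your computational core is exactly right and matches the paper: the facet contraction $\rho$ against a linear form $\ell_t$ gives $\nabla_\alpha\rho+\rho\nabla_\alpha$ acting on the $\ell_t$-graded piece at level $p$ as multiplication by $\ell_t(\alpha)+p$, which is nonzero by nonresonance, so each graded piece is contractible. The difference is architectural, and the paper's architecture dissolves precisely the obstacle you flag.

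You attack $\mathcal{Q}^\bullet=R'/M_U$ directly and enlarge $U$ toward $\mathbb{Z}^n$; this forces you to analyse slabs like $V_1=\{v\in W\mid \ell_1(v)\ge m_1\}$ on which no single $\ell_k$ is bounded above, so the only available facet filtration is bounded on the wrong side and you are driven to colimit arguments (and a separate treatment of the non-pointed case via $\sum_k\ell_k$). The paper instead goes \emph{down}: it first produces, via a cited lemma, a shifted cone $W(v,S)=\{u\mid \ell_i(u)\ge v_i\ \forall i\}$ contained in $U$ (any $U$ satisfying (2.4) contains some translate of the monoid $U_0$, hence some $W(v,S)$), and then proves a single lemma: whenever $W(v,T)\subseteq U$, the inclusion $M_{W(v,T)}\hookrightarrow M_U$ is a quasi-isomorphism. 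This is done by adding one constraint $\ell_t\ge v_t$ at a time; the successive quotient is supported where $\ell_t(u)<v_t$, so the $\ell_t$-filtration is exhaustive and \emph{bounded above} (it vanishes at level $v_t$), and ``all graded pieces acyclic $\Rightarrow$ acyclic'' holds with no completeness or colimit hypotheses. Applying the lemma once with target $U$ and once with target $\mathbb{Z}^n$ (both contain $W(v,S)$) gives the proposition.

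So your proposal is not wrong, but the ``main obstacle'' you identify is an artefact of the direction of the d\'evissage. Reversing it---comparing both $M_U$ and $R'$ to a common small subcomplex $M_{W(v,S)}$ rather than interpolating from $M_U$ up to $R'$---makes every relevant filtration terminate above, and the argument becomes a clean finite induction with no convergence issues and no pointed/non-pointed case split.
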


As in Section 1, let $\ell_1,\dots,\ell_s\in{\mathbb Z}[u_1,\dots,u_n]$ be
the normalized homogeneous linear forms defining the codimension-one
faces of $C(A)$ and put $S = \{1,\dots,s\}$.  For
$v=(v_1,\dots,v_s)\in{\mathbb Z}^s$ and a subset $T\subseteq S$, put
\[ W(v,T) = \{u\in{\mathbb Z}^n\mid \ell_i(u)\geq v_i \text{ for all
  $i\in T$}\}. \]
Since $\ell_i(a^{(j)})\geq 0$ for all $i,j$, the set $W(v,T)$
satisfies (2.4).  By \cite[Lemma 3.12]{A1}, there exists $v$ such that
$W(v,S)\subseteq U_0$.  It follows that if $U$ satisfies (2.4), then
there exists $v$ such that $W(v,S)\subseteq U$.  

\begin{lemma}
If $\alpha$ is nonresonant for $A$, $U$ satisfies $(2.4)$, and
$W(v,T)\subseteq U$, then the inclusion 
\begin{equation} 
(\Omega^\bullet_{M_{W(v,T)}}\langle
\log\rangle,\nabla_\alpha)\hookrightarrow (\Omega^\bullet_{M_U}\langle
\log\rangle,\nabla_\alpha) 
\end{equation}
is a quasi-isomorphism of complexes of ${\mathcal D}$-modules.
\end{lemma}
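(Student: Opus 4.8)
The plan is to show that the cokernel of the inclusion $(2.6)$ is an acyclic complex, after first replacing the single inclusion $M_{W(v,T)}\hookrightarrow M_U$ by a chain of ``one face at a time'' inclusions. Since $W(v,T)\subseteq U$ we have $W(v,T)=U\cap W(v,T)$, so if we enumerate $T=\{i_1,\dots,i_r\}$ and set $U^{(m)}=\{u\in U\mid \ell_{i_k}(u)\geq v_{i_k}\text{ for all }k\leq m\}$, then $U=U^{(0)}\supseteq U^{(1)}\supseteq\cdots\supseteq U^{(r)}=W(v,T)$. Each $U^{(m)}$ satisfies $(2.4)$, because $\ell_i(a^{(j)})\geq 0$ for all $i,j$, and each $(\Omega^\bullet_{M_{U^{(m)}}}\langle\log\rangle,\nabla_\alpha)\hookrightarrow(\Omega^\bullet_{M_{U^{(m-1)}}}\langle\log\rangle,\nabla_\alpha)$ is an inclusion of subcomplexes of $\mathcal D$-modules whose composite is $(2.6)$. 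So it suffices to treat the atomic case: $U'$ satisfies $(2.4)$, $i_0\in S$, $c\in\mathbb Z$, $U''=\{u\in U'\mid \ell_{i_0}(u)\geq c\}$ (which again satisfies $(2.4)$), and I must show the quotient complex $(\Omega^\bullet_{M_{U'}/M_{U''}}\langle\log\rangle,\nabla_\alpha)$ is acyclic.

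By $(1.2)$ and $(1.3)$, under the identification $dx_i/x_i\leftrightarrow e_i$ the complex $(\Omega^\bullet_{M_{U'}}\langle\log\rangle,\nabla_\alpha)$ is the (cochain) Koszul complex on the free $\mathbb C[\lambda]$-module $M_{U'}$ attached to the pairwise-commuting operators $D_{1,\alpha},\dots,D_{n,\alpha}$ of $(1.3)$, and likewise for $M_{U''}$ and hence for $N:=M_{U'}/M_{U''}$, which is free over $\mathbb C[\lambda]$ on $\{x^u\mid u\in U',\ \ell_{i_0}(u)<c\}$. The key device is the face operator attached to $\ell_{i_0}$: writing $\ell_{i_0}(u)=\sum_i b_i u_i$ (so $\gcd(b_1,\dots,b_n)=1$), put $\Theta=\sum_{i=1}^n b_i D_{i,\alpha}$. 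A direct computation from $(1.3)$ gives $\Theta(x^u)=\bigl(\ell_{i_0}(u)+\ell_{i_0}(\alpha)\bigr)x^u+\sum_j\lambda_j\,\ell_{i_0}(a^{(j)})\,x^{u+a^{(j)}}$; the terms coming from $a^{(j)}$ on the face $\ell_{i_0}=0$ drop out because their coefficient is $\ell_{i_0}(a^{(j)})=0$. Hence, with respect to the grading of $N$ by the value of $\ell_{i_0}$, the operator $\Theta$ is ``upper triangular'', acting on the piece $\ell_{i_0}=p$ by the scalar $p+\ell_{i_0}(\alpha)$, which is a unit of $\mathbb C[\lambda]$ since $\ell_{i_0}(\alpha)\notin\mathbb Z$ by nonresonance. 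Because the values of $\ell_{i_0}$ on $N$ are bounded above (by $c-1$) while the off-diagonal part of $\Theta$ strictly raises the $\ell_{i_0}$-value, that off-diagonal part is locally nilpotent on $N$; therefore $\Theta=(\text{diagonal})\circ(1+\text{locally nilpotent})$ is invertible on $N$.

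Finally, since $(b_1,\dots,b_n)$ is primitive it is the first row of a matrix $B\in\mathrm{GL}_n(\mathbb Z)$, and the automorphism of $\bigwedge^\bullet$ induced by $B$ carries the Koszul complex of $(D_{1,\alpha},\dots,D_{n,\alpha})$ on $N$ isomorphically onto the Koszul complex of the commuting family $(\sum_j B_{1j}D_{j,\alpha},\dots,\sum_j B_{nj}D_{j,\alpha})$, whose first member is $\Theta$. A Koszul complex one of whose operators is invertible is contractible (the homotopy is $\Theta^{-1}$ composed with contraction against the first basis vector), so the Koszul complex on $N$ is acyclic. This settles the atomic case, and composing the quasi-isomorphisms along the chain $U^{(r)}\subseteq\cdots\subseteq U^{(0)}$ proves the lemma.

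I expect the crux — and the reason the ``one face at a time'' reduction is forced on us — to be that there is \emph{no} single invertible operator on the full quotient $M_U/M_{W(v,T)}$: a basis monomial $x^u$ there only needs $\ell_i(u)<v_i$ for \emph{some} $i\in T$, so the $\ell_{i_0}$-values are not bounded above and $\Theta$ is not locally nilpotent. The remaining points that require care are the explicit computation of $\Theta$ on monomials (in particular checking that the face monomials really disappear), the verification that $U^{(m)}$ and $U''$ inherit $(2.4)$, and the passage from ``$\Theta$ invertible on each graded piece of $N$'' to ``$\Theta$ invertible on $N$'' via local nilpotence of the off-diagonal term.
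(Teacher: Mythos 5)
Your proof is correct and takes essentially the same route as the paper: the same one-face-at-a-time reduction, the same computation of $\sum_i b_i D_{i,\alpha}$ on monomials, and the same use of nonresonance together with the upper bound on $\ell_{i_0}$ in the quotient. Your invertibility argument for $\Theta$ (diagonal plus locally nilpotent) combined with contractibility of a Koszul complex having an invertible operator merely repackages the paper's filtration by the $\ell_t$-value and its homotopy $\rho$; indeed $\nabla_\alpha\rho+\rho\nabla_\alpha$ acts on coefficients as $\Theta$, so $\Theta^{-1}\rho$ is already a contracting homotopy and the $\mathrm{GL}_n(\mathbb{Z})$ change of basis is not needed.
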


Since $W(v,T)\subseteq {\mathbb Z}^n$ for all $v,T$, Lemma 2.6
implies that the inclusion  
\begin{equation}
(\Omega^\bullet_{M_{W(v,T)}}\langle
\log\rangle,\nabla_\alpha)\hookrightarrow(\Omega^\bullet_{R'/{\mathbb 
    C}[\lambda]},\nabla_\alpha) 
\end{equation}
is a quasi-isomorphism for all $v,T$.  The quasi-isomorphisms (2.7)
and (2.8) imply Proposition~2.5.

\begin{proof}[Proof of Lemma $2.6$]
For $T'\subseteq T$, let 
\[ U(v,T')=\{u\in U\mid \text{$\ell_i(u)\geq v_i$ for all $i\in T'$}\}. \]
Note that $U(v,\emptyset) = U$ and that $U(v,T) = W(v,T)$ (since
$W(v,T)\subseteq U$).  By induction, it thus
suffices to show that if $T'\subseteq T$ and $T'' = T'\cup\{t\}$ with
$t\in T\setminus T'$, then the inclusion
\begin{equation}
(\Omega^\bullet_{M_{U(v,T'')}}\langle\log\rangle,\nabla_\alpha)\hookrightarrow
(\Omega^\bullet_{M_{U(v,T')}}\langle\log\rangle,\nabla_\alpha) 
\end{equation}
is a quasi-isomorphism.  Let $Q^\bullet$ be the quotient complex
\[ Q^\bullet = \Omega^\bullet_{M_{U(v,T')}}\langle\log\rangle/
\Omega^\bullet_{M_{U(v,T'')}}\langle\log\rangle. \]
We show that $(2.9)$ is a quasi-isomorphism by showing that
$H^k(Q^\bullet) = 0$ for all $k$.  

We define a filtration $\{F_p\}_{p\leq v_t}$ on the complex 
$\Omega^\bullet_{M_{U(v,T')}}\langle\log\rangle$.  For $p\leq v_t$,
let $F_p \Omega^k_{M_{U(v,T')}}\langle\log\rangle$ be 
the ${\mathbb C}[\lambda]$-submodule spanned by differential forms
\begin{equation}
x^u\,\frac{dx_{i_1}}{x_{i_1}}\wedge\cdots\wedge\frac{dx_{i_k}}{x_{i_k}}
\end{equation}
satisfying $\ell_t(u)\geq p$.  By (1.2) and (1.3), $\nabla_\alpha$
respects this filtration.  We also denote by $F_p$ the induced
filtrations on the subcomplex
$\Omega^\bullet_{M_{U(v,T'')}}\langle\log\rangle$ and the quotient 
complex $Q^\bullet$.  Note that since 
\[ F_{v_t}\Omega^\bullet_{M_{U(v,T')}}\langle\log\rangle =
\Omega^\bullet_{M_{U(v,T'')}}\langle\log\rangle \]
we have $F_{v_t}Q^\bullet = 0$.  To show that $H^k(Q^\bullet) = 0$ for
all $k$, it suffices to show that $H^k({\rm gr}_pQ^\bullet) = 0$ for
all $k$ and $p$, where ${\rm gr}_pQ^\bullet$ denotes the $p$-th graded
piece of the associated graded of the filtration~$F_p$.

Write $\ell_t(u) = \sum_{i=1}^n c_iu_i\in{\mathbb Z}[u]$.
Define $\rho:\Omega^k_{M_{U(v,T')}}\langle\log\rangle\to 
\Omega^{k-1}_{M_{U(v,T')}}\langle\log\rangle$ to be the ${\mathbb
  C}[\lambda]$-module homomorphism satisfying
\[ \rho\biggl(x^u\,\frac{dx_{i_1}}{x_{i_1}}\wedge\cdots\wedge
\frac{dx_{i_k}}{x_{i_k}}\biggr) = x^u\sum_{j=1}^k
(-1)^{j-1}c_{i_j}\,\frac{dx_{i_1}}{x_{i_1}}\wedge\cdots\wedge
\widehat{\frac{dx_{i_j}}{x_{i_j}}}\wedge
  \cdots\wedge\frac{dx_{i_k}}{x_{i_k}}. \]
It is straighforward to check that
\[ (\nabla_\alpha\circ\rho + \rho\circ\nabla_\alpha)\biggl(x^u\,
\frac{dx_{i_1}}{x_{i_1}}\wedge\cdots\wedge
\frac{dx_{i_k}}{x_{i_k}}\biggr) = \biggl(\sum_{i=1}^n
c_iD_{i,\alpha}(x^u)\biggr) \frac{dx_{i_1}}{x_{i_1}}\wedge\cdots\wedge
\frac{dx_{i_k}}{x_{i_k}}, \]
and a calculation using (1.1) and (1.3) shows that
\begin{equation}
\sum_{i=1}^n c_iD_{i,\alpha}(x^u) = \ell_t(\alpha+u) x^u +
\sum_{j=1}^N \lambda_j\ell_t(a^{(j)})x^{u+a^{(j)}}.
\end{equation}
Suppose that the form (2.10) lies in $F_p\setminus F_{p+1}$.
Then $\ell_t(\alpha+u)=\ell_t(\alpha) + p$ and if $\ell_t(a^{(j)})\neq 0$,
then $\ell_t(u+a^{(j)})> p$.  It follows that the second term on the
right-hand side of (2.11) lies in $F_{p+1}$, so on the associated
graded complex the induced map
\[ (\nabla_\alpha\circ\rho + \rho\circ\nabla_\alpha):{\rm gr}_pQ^k\to
   {\rm gr}_pQ^k \]
is just multiplication by $\ell_t(\alpha)+p$.  Since $\alpha$ is
nonresonant for $A$, $\ell_t(\alpha)+p\neq 0$.  Thus multiplication by a
nonzero constant is homotopic to the zero map, which implies that
$H^k({\rm gr}_pQ^\bullet)=0$ for all $k$. 
\end{proof}

\section{Proof of Theorem 1.5}

In this section we assume that $f=x_ng(x_1,\dots,x_{n-1})$.  
By the Remark following Theorem~1.4, we may assume that if
$\alpha_n\in{\mathbb Z}$, then $\alpha_n\geq 1$.  Let $R_+ = {\mathbb
  C}[\lambda][x_1^{\pm 1},\dots,x_{n-1}^{\pm 1}, x_n]$.  By
Proposition~2.5, the inclusion 
\[ (\Omega^\bullet_{R_+}\langle\log\rangle,\nabla_\alpha)
\hookrightarrow (\Omega^\bullet_{R'/{\mathbb
    C}[\lambda]},\nabla_\alpha) \]
is a quasi-isomorphism of complexes of ${\mathcal D}$-modules.
Theorem~1.5 is then a consequence of the following result.

\begin{proposition}
If $\alpha_n\not\in{\mathbb Z}_{\leq 0}$, then there is a
quasi-isomorphism of complexes of ${\mathcal D}$-modules
\[ (\Omega^\bullet_{R_+}\langle\log\rangle,\nabla_\alpha)\to
(\Omega^\bullet_{U/{\mathbb
    C}[\lambda]}[-1],\widetilde{\nabla}_\alpha). \]
\end{proposition}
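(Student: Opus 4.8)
The plan is to exploit the special shape $f = x_n g$ to "integrate out" the variable $x_n$, turning the de Rham-type complex on $R_+$ (where $x_n$ appears polynomially) into the complex on $U$ (where $x_n$ has been eliminated and $g$ has been inverted). Concretely, I would decompose every element of $\Omega^k_{R_+}\langle\log\rangle$ according to powers of $x_n$: each such form is a finite sum $\sum_{m\geq 0} x_n^m\,\omega_m + \sum_{m\geq 0} x_n^m\,\frac{dx_n}{x_n}\wedge\eta_m$, where $\omega_m,\eta_m$ involve only $x_1,\dots,x_{n-1}$ and their logarithmic differentials. Writing $\nabla_\alpha$ in these coordinates, the operator $D_{n,\alpha} = x_n\partial/\partial x_n + \alpha_n + x_n\,\partial f/\partial x_n = x_n\partial/\partial x_n + \alpha_n + x_n g$ acts on the $x_n^m$-component as multiplication by $(m+\alpha_n)$ plus a shift $m \mapsto m+1$ with coefficient $g$. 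Since $\alpha_n \notin \mathbb{Z}_{\leq 0}$, the scalars $m+\alpha_n$ are all nonzero for $m\geq 0$, which is exactly what makes the $x_n$-direction acyclic except for the contribution that survives as a form on $U$.

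The key steps, in order: (1) Set up the bicomplex / short exact sequence that separates the "$dx_n/x_n$ part" from the "$x_n$-polynomial part," so that $\Omega^\bullet_{R_+}\langle\log\rangle$ is filtered with associated graded controlled by $D_{n,\alpha}$. (2) Show that the subcomplex of forms with no $dx_n/x_n$ and the quotient complex of forms divisible by $dx_n/x_n$ are related, via $D_{n,\alpha}$, by a long exact sequence; the connecting map is essentially $D_{n,\alpha}$ acting in the $x_n$-grading. (3) Prove the "$x_n$-acyclicity": because $D_{n,\alpha}$ is invertible on the span of $\{x_n^m : m\geq 0\}$ modulo the image of multiplication by $g$, one can solve $D_{n,\alpha}(\xi) = \text{(given)}$ uniquely up to the kernel, and the cokernel of multiplication-by-$x_n g$ on $R_+$ (reduction mod $x_n$, then invert... ) collapses the complex onto forms on the locus $g\neq 0$. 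More precisely, I expect: the natural "residue at $x_n = 0$ after dividing by powers of $g$" map, or equivalently the map sending a form to the solution of the $x_n$-equation evaluated appropriately, induces the claimed quasi-isomorphism $\Omega^\bullet_{R_+}\langle\log\rangle \to \Omega^\bullet_{U/\mathbb{C}[\lambda]}[-1]$. (4) Check that this map intertwines $\nabla_\alpha$ with $\widetilde{\nabla}_\alpha$ — here the twist by $-\alpha_n\,dg/g$ in $\widetilde{\nabla}_\alpha$ should emerge precisely from commuting $d$ past the factor $g^{-\alpha_n}$ that appears when one solves $D_{n,\alpha}(\xi) = x_n^m\eta$ (formally $\xi \sim x_n^{m}\eta\,/(m+\alpha_n) + \cdots$, and summing the geometric-type series in $x_n g$ produces the $g^{-\alpha_n}$). (5) Verify $\mathcal{D}$-linearity: the operators $D_j = \partial_j + x^{a^{(j)}}$ — and in particular $D_n$ or those $\partial_j$ hitting coefficients of $g$ — must be shown to commute with the constructed map; this is where one uses that the $\mathcal{D}$-action was defined compatibly via conjugation by $x^\alpha\exp f$ on one side and by $x_1^{\alpha_1}\cdots x_n^{\alpha_n}/g^{\alpha_n}$ on the other, and the identity $f = x_n g$ makes these two conjugating factors match up after the $x_n$-integration.

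The main obstacle I anticipate is step (3)–(4): making rigorous the passage from the formal manipulation "$\nabla_\alpha = (x^\alpha \exp f)^{-1} \circ d \circ x^\alpha\exp f$ and $\exp f = \exp(x_n g)$, integrate over $x_n$" to an actual chain map on the (polynomial-in-$x_n$, not power-series) complex $\Omega^\bullet_{R_+}\langle\log\rangle$, landing in the complex over the ring of regular functions on $U$ where $g^{-1}$ is available. The bookkeeping of which powers of $g$ clear denominators, and the proof that the resulting map is simultaneously well-defined, a chain map for $\nabla_\alpha \leftrightarrow \widetilde\nabla_\alpha$, and a quasi-isomorphism, is the technical heart; the nonresonance hypothesis $\alpha_n\notin\mathbb{Z}_{\leq 0}$ enters exactly to guarantee the relevant operators are invertible so that no cohomology is lost or gained in the $x_n$-direction. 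I would handle it by the same filtration-and-homotopy technique used in the proof of Lemma 2.6 — build an explicit homotopy operator on the associated graded, using the scalars $(m+\alpha_n)^{-1}$, and then conclude by the spectral sequence of the filtration.
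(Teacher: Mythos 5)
Your plan follows essentially the same route as the paper's proof: split $\Omega^\bullet_{R_+}\langle\log\rangle$ into a two-row double complex according to the presence of $dx_n/x_n$, exploit the invertibility in the $x_n$-direction coming from $\alpha_n\notin{\mathbb Z}_{\leq 0}$ (the nonvanishing scalars $m+\alpha_n$), and identify what survives with $(\Omega^\bullet_{U/{\mathbb C}[\lambda]}[-1],\widetilde{\nabla}_\alpha)$ via a reduction map that trades powers of $x_n$ for powers of $g$ in the denominator with Pochhammer-type coefficients. The only difference is in the final verification: the paper writes the comparison map explicitly, $\gamma\bigl(x^u\,\tfrac{dx_{i_1}}{x_{i_1}}\cdots\tfrac{dx_{i_k}}{x_{i_k}}\,\tfrac{dx_n}{x_n}\bigr)=(-1)^{u_n}(\alpha_n)_{u_n}x_1^{u_1}\cdots x_{n-1}^{u_{n-1}}g^{-u_n}\,\tfrac{dx_{i_1}}{x_{i_1}}\cdots\tfrac{dx_{i_k}}{x_{i_k}}$, and shows its kernel is exactly $\partial_v\Omega^{\bullet,0}$ by induction on the $x_n$-degree, rather than running your filtration-and-homotopy spectral sequence, but this is a minor variation on the same idea.
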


\begin{proof}
We regard $\Omega^\bullet_{R_+}\langle\log\rangle$ as the total
complex associated to a certain two-row double complex:  let
\[ \Omega^{k,0} = \bigoplus_{1\leq i_1<\dots<i_k\leq n-1}
R_+\,\frac{dx_{i_1}}{x_{i_1}} \wedge\cdots\wedge
\frac{dx_{i_k}}{x_{i_k}} \]
and let
\[ \Omega^{k,1} = \bigoplus_{1\leq i_1<\dots<i_k\leq n-1} R_+\,
\frac{dx_{i_1}}{x_{i_1}} \wedge\cdots\wedge \frac{dx_{i_k}}{x_{i_k}}
\wedge \frac{dx_n}{x_n}.  \]
Let $\partial_h:\Omega^{k,i}\to\Omega^{k+1,i}$ be the map
\[ \partial_h(\omega) = d'\!\omega +
\sum_{i=1}^{n-1}\alpha_i\,\frac{dx_i}{x_i}\wedge\omega + 
x_n(d'\!g\wedge\omega), \]
where $d'$ is exterior differentiation relative to the variables
$x_1,\dots,x_{n-1}$, and let $\partial_v:\Omega^{k,0}\to \Omega^{k,1}$
be the map
\[ \partial_v(\omega) = d''\omega +
\alpha_n\,\frac{dx_n}{x_n}\wedge\omega + g\,dx_n\wedge\omega, \] 
where $d''$ is exterior differentiation relative to the variable
$x_n$.  Since $\partial_v$ is injective, the canonical projection
$\Omega^k_{R_+}\langle\log\rangle\to \Omega^{k-1,1}$ induces a
quasi-isomorphism
\begin{equation}
(\Omega^\bullet_{R_+}\langle\log\rangle,\nabla_\alpha)
\to ((\Omega^{\bullet,1}/\partial_v\Omega^{\bullet,0})[-1],\partial_h)
\end{equation}
(see \cite[Appendix B]{M}).  

To complete the proof of Proposition 3.1, we define a
quasi-isomorphism of complexes between
$(\Omega^{\bullet,1}/\partial_v\Omega^{\bullet,0},\partial_h)$ and
$(\Omega^\bullet_{U/{\mathbb C}[\lambda]},\widetilde{\nabla}_\alpha)$.
Define $\gamma:\Omega^{k,1}\to\Omega^k_{U/{\mathbb C}[\lambda]}$ to
  be the ${\mathbb C}[\lambda]$-module homomorphism satisfying
\[ \gamma\biggl(x^u\,\frac{dx_{i_1}}{x_{i_1}}\cdots
\frac{dx_{i_k}}{x_{i_k}}\,\frac{dx_n}{x_n}\biggr) =
\frac{(-1)^{u_n}(\alpha_n)_{u_n} x_1^{u_1}\cdots
  x_{n-1}^{u_{n-1}}}{g^{u_n}}\,
  \frac{dx_{i_1}}{x_{i_1}}\cdots\frac{dx_{i_k}}{x_{i_k}}, \]
where $(\alpha_n)_{u_n} = \alpha_n(\alpha_n + 1) \cdots
(\alpha_n+u_n-1)$.  It is straightforward to check that $\gamma$
commutes with boundary operators, hence defines a homomorphism of
complexes from $\Omega^{\bullet,1}$ to $\Omega^\bullet_{U/{\mathbb
    C}[\lambda]}$.  The hypothesis that $\alpha_n\not\in{\mathbb
  Z}_{\leq 0}$  implies that $\gamma$ is surjective, and it is
straightforward to check that $\gamma(\partial_v\Omega^{k,0}) = 0$.
It remains only to show that 
\[ \ker(\gamma:\Omega^{k,1}\to\Omega^k_{U/{\mathbb C}[\lambda]}) =
\partial_v\Omega^{k,0}. \]

Let $\xi\in\Omega^{k,1}$ satisfy $\gamma(\xi) = 0$.  Write
\[ \xi = \sum_{i=m}^M x_n^i\xi_i\,\frac{dx_n}{x_n}, \]
where $\xi_i\in\Omega^{k,0}$ is in the ${\mathbb C}[\lambda]$-span of
the forms
\begin{equation}
x_1^{w_1}\cdots x_{n-1}^{w_{n-1}}\,
\frac{dx_{i_1}}{x_{i_1}}\wedge\cdots
\wedge\frac{dx_{i_k}}{x_{i_k}}, \quad 1\leq i_1<\dots<i_k\leq n-1.  
\end{equation}
We prove by induction on $M-m$ that
$\xi\in\partial_v\Omega^{k,0}$.  We have
\[ 0 = \gamma(\xi) =  \sum_{i=m}^M
(-1)^i(\alpha_n)_i\frac{\xi_i}{g^i}. \]
Since $(\alpha_n)_M\neq 0$, this equation implies $\xi_M = g\eta$, where
$\eta$ is in the ${\mathbb C}[\lambda]$-span of the forms (3.3).  It
follows that 
\[ \xi = (-1)^k\partial_v(x_n^{M-1}\eta)
-(\alpha_n+M-1)x_n^{M-1}\eta\,\frac{dx_n}{x_n} +
  \sum_{m=1}^{M-1}x_n^i\xi_i\,\frac{dx_n}{x_n}. \]
By induction we are reduced to the case $\xi =
x_i^m\xi_m\,\frac{dx_n}{x_n}$.  But in this case
\[ 0 = \gamma(\xi) = (-1)^m(\alpha_n)_m\xi_m/g^m, \]
so $\xi_m = 0$.
\end{proof}

If $\alpha_n\not\in{\mathbb Z}$, there is no need to introduce the
complex $\Omega^\bullet_{R_+}\langle\log\rangle$.

\begin{proposition}
If $\alpha_n\not\in{\mathbb Z}$, then there is a quasi-isomorphism of
complexes of ${\mathcal D}$-modules
\[ (\Omega^\bullet_{R'/{\mathbb C}[\lambda]},\nabla_\alpha)\to
(\Omega^\bullet_{U/{\mathbb
    C}[\lambda]}[-1],\widetilde{\nabla}_\alpha). \]
\end{proposition}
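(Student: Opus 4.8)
The plan is to run the argument of Proposition~3.1 directly on $\Omega^\bullet_{R'/\mathbb{C}[\lambda]}$, without passing through the auxiliary complex $\Omega^\bullet_{R_+}\langle\log\rangle$: when $\alpha_n\notin\mathbb{Z}$ every formula that appeared there makes sense for arbitrary (Laurent) powers of $x_n$. First I would view $\Omega^\bullet_{R'/\mathbb{C}[\lambda]}$ as the total complex of the two-row double complex in which $\Omega^{k,0}$ is the span of the $\frac{dx_{i_1}}{x_{i_1}}\wedge\cdots\wedge\frac{dx_{i_k}}{x_{i_k}}$ with $1\le i_1<\cdots<i_k\le n-1$ and coefficients in $R'$, and $\Omega^{k,1}=\Omega^{k,0}\wedge\frac{dx_n}{x_n}$, with $\partial_h$ the part of $\nabla_\alpha$ relative to $x_1,\dots,x_{n-1}$ (here $f=x_ng$, so $d'f=x_n\,d'g$) and $\partial_v(\omega)=d''\omega+\alpha_n\frac{dx_n}{x_n}\wedge\omega+g\,dx_n\wedge\omega$. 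Since $R'$ is a domain and $g\ne 0$, inspecting the coefficient of the highest power of $x_n$ shows $\partial_v$ is injective, so exactly as in the derivation of~(3.2) (cf.\ \cite[Appendix~B]{M}) the projection onto the $\frac{dx_n}{x_n}$-component is a quasi-isomorphism of complexes of $\mathcal{D}$-modules $(\Omega^\bullet_{R'/\mathbb{C}[\lambda]},\nabla_\alpha)\to\bigl((\Omega^{\bullet,1}/\partial_v\Omega^{\bullet,0})[-1],\partial_h\bigr)$.

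It then remains to build a $\mathcal{D}$-module isomorphism of complexes $(\Omega^{\bullet,1}/\partial_v\Omega^{\bullet,0},\partial_h)\xrightarrow{\cong}(\Omega^\bullet_{U/\mathbb{C}[\lambda]},\widetilde\nabla_\alpha)$, and this is where $\alpha_n\notin\mathbb{Z}$ enters. Because $\alpha_n+j\ne 0$ for every $j\in\mathbb{Z}$, the Pochhammer symbol extends to all integer indices by $(\alpha_n)_0=1$ and $(\alpha_n)_{j+1}=(\alpha_n+j)(\alpha_n)_j$, and each $(\alpha_n)_j$ is then a nonzero rational number. I would define $\gamma\colon\Omega^{k,1}\to\Omega^k_{U/\mathbb{C}[\lambda]}$ by the same formula as in Proposition~3.1, now with $u_n\in\mathbb{Z}$ arbitrary,
\[ \gamma\Bigl(x^u\,\frac{dx_{i_1}}{x_{i_1}}\cdots\frac{dx_{i_k}}{x_{i_k}}\,\frac{dx_n}{x_n}\Bigr)=\frac{(-1)^{u_n}(\alpha_n)_{u_n}\,x_1^{u_1}\cdots x_{n-1}^{u_{n-1}}}{g^{u_n}}\,\frac{dx_{i_1}}{x_{i_1}}\cdots\frac{dx_{i_k}}{x_{i_k}}. \]
A direct calculation, whose only inputs are the recursion for $(\alpha_n)_j$ and the identity $d'(g^{-u_n})=-u_n g^{-u_n-1}\,d'g$, shows $\gamma\circ\partial_h=\widetilde\nabla_\alpha\circ\gamma$ and $\gamma\circ\partial_v=0$, and that $\gamma$ intertwines the $\mathcal{D}$-actions (the substitution $x_n^{u_n}\mapsto(-1)^{u_n}(\alpha_n)_{u_n}g^{-u_n}$ converts the action of each $D_j$ on the source into that of $\widetilde D_j$ on the target). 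Since every $(\alpha_n)_j$ is nonzero, $\gamma$ is surjective, hence induces a surjection of complexes $\Omega^{\bullet,1}/\partial_v\Omega^{\bullet,0}\to\Omega^\bullet_{U/\mathbb{C}[\lambda]}$.

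Finally I would verify $\ker\bigl(\gamma\colon\Omega^{k,1}\to\Omega^k_{U/\mathbb{C}[\lambda]}\bigr)=\partial_v\Omega^{k,0}$, which finishes the proof: composing the resulting isomorphism of complexes with the quasi-isomorphism of the first paragraph gives the assertion. Writing $\xi=\sum_{i=m}^M x_n^i\xi_i\,\frac{dx_n}{x_n}$ with each $\xi_i$ in the $\mathbb{C}[\lambda]$-span of the forms~(3.3), the condition $\gamma(\xi)=0$ becomes $\sum_{i=m}^M(-1)^i(\alpha_n)_i g^{-i}\xi_i=0$; multiplying by $g^M$ forces $(\alpha_n)_M\xi_M$ to lie in $g$ times the span of the forms~(3.3), and since $(\alpha_n)_M\ne 0$ we get $\xi_M=g\eta$. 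Subtracting the appropriate scalar multiple of $\partial_v(x_n^{M-1}\eta)$ shrinks the range of $x_n$-powers occurring in $\xi$ while preserving $\gamma(\xi)=0$, so induction on $M-m$ — with base case $M=m$, where $(\alpha_n)_m\xi_m=0$ forces $\xi_m=0$ — shows $\xi\in\partial_v\Omega^{k,0}$. There is no real obstacle here, since the architecture is dictated by Proposition~3.1; the one point requiring care is that the recursion $(\alpha_n)_{j+1}=(\alpha_n+j)(\alpha_n)_j$ — which simultaneously powers $\gamma\circ\partial_h=\widetilde\nabla_\alpha\circ\gamma$, the vanishing $\gamma\circ\partial_v=0$, the surjectivity of $\gamma$, and the nonvanishing $(\alpha_n)_M\ne 0$ used in the peeling step (now $M$ may be $\le 0$) — holds for \emph{all} $j\in\mathbb{Z}$, and it is precisely here that the hypothesis $\alpha_n\notin\mathbb{Z}$, rather than merely $\alpha_n\notin\mathbb{Z}_{\le 0}$, is consumed.
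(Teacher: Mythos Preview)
Your proposal is correct and follows exactly the approach the paper sketches: rerun the proof of Proposition~3.1 verbatim with $R'$ in place of $R_+$, extending the Pochhammer symbol to negative indices via the recursion (equivalently, $(\alpha_n)_{u_n}=\bigl((\alpha_n-1)(\alpha_n-2)\cdots(\alpha_n+u_n)\bigr)^{-1}$ for $u_n<0$), which is well-defined and nonzero precisely because $\alpha_n\notin\mathbb{Z}$. One trivial slip: $(\alpha_n)_j$ is a nonzero \emph{complex} number, not rational, since no rationality of $\alpha_n$ is assumed in this proposition.
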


\begin{proof}[Sketch of proof]
One proceeds as in the proof of Proposition~3.1 with the complex
$\Omega^\bullet_{R_+}\langle\log\rangle$ replaced by
$\Omega^\bullet_{R'/{\mathbb C}[\lambda]}$.  One defines $\gamma$ as
before with the understanding that for $u_n<0$
\[ (\alpha_n)_{u_n} = ((\alpha_n-1)(\alpha_n-2)\cdots
(\alpha_n+u_n))^{-1}. \]
The proof then proceeds unchanged.  (See \cite[Lemma~2.5]{AS1} for the
details in a similar situation.) 
\end{proof}

\section{Application of N. Katz's results}

We begin by sketching the proof that the $H^i(\Omega^\bullet_{U/{\mathbb
    C}[\lambda]},\widetilde{\nabla}_\alpha)$ come from geometry.  Let
$X\subseteq{\mathbb T}^n_{{\mathbb C}[\lambda]}$ be the hypersurface
$x_n^Dg(x_1^D,\dots,x_{n-1}^D) -1 = 0$, where $D\in{\mathbb Z}_{>0}$.
Since $\Omega^1_{X/{\mathbb C}[\lambda]}$ is a free module with
basis $\{dx_i\}_{i=1}^{n-1}$, ${\mathcal D}$ acts on global $i$-forms
by acting on their coefficients relative to exterior powers of this
basis.  The group $(\boldsymbol{\mu}_D)^n$ acts on $X$ and its relative
de Rham complex $(\Omega^\bullet_{X/{\mathbb C}[\lambda]},d)$.  The
irreducible representations $\chi$ of $(\boldsymbol{\mu}_D)^n$ can be
indexed by $n$-tuples $(a_1,\dots,a_n)$, $0\leq a_i<D$, so that if
$\chi$ corresponds to $(a_1,\dots,a_n)$, then there is an isomorphism
of complexes of ${\mathcal D}$-modules
$(\Omega^\bullet_{X/{\mathbb C}[\lambda]},d)^\chi\cong
(\Omega^\bullet_{U/{\mathbb
    C}[\lambda]},\widetilde{\nabla}_\alpha)$,
where $\alpha = (a_1/D,\dots,a_n/D)$.)  The first assertion of
Corollary~1.6 then implies that 
\begin{equation}
{\mathcal M}_\alpha\cong H^{n-1}_{\rm DR}(X/{\mathbb
  C}[\lambda])^\chi,
\end{equation}
which establishes the second assertion of Corollary~1.6.

Now suppose that $C={\rm Spec}(A)$ is a smooth connected curve over
${\mathbb C}$ and $\phi:C\to {\mathbb A}^N = {\rm Spec}({\mathbb
  C}[\lambda])$ is a morphism.  Let $X'$ be the pullback of $X$ to a
variety over $C$, i.e.,  $X' = A\otimes_{{\mathbb C}[\lambda]}X$.
Then $X'$ is the hypersurface in ${\mathbb T}^n_A$ defined by the equation
$x_n^Dg^\phi(x_1^D,\dots,x_{n-1}^D) -1 = 0$, where $g^\phi\in
A[x_1^{\pm 1},\dots,x_{n-1}^{\pm 1}]$ is the Laurent polynomial obtained
from $g$ by applying $\phi$ to its coefficients (by abuse of notation,
we also denote by $\phi$ the homomorphism ${\mathbb C}[\lambda]\to A$
corresponding to $\phi:C\to{\mathbb A}^N$).  The varieties $X$ and
$X'$ are smooth affine schemes whose de Rham cohomology can be computed as
the cohomology of the complex of global sections of the de Rham
complex.  By the right-exactness of tensor products, one has
\begin{equation} 
\phi^*(H_{\rm DR}^{n-1}(X/{\mathbb C}[\lambda]))\cong
H_{\rm DR}^{n-1}(X'/A). 
\end{equation}
It follows from \cite[Section~14]{K3} that $H_{\rm DR}^{n-1}(X'/A)$
has regular singular points and quasi-unipotent local monodromy at
infinity (i.e., at all points of the quotient field of $A$).
Therefore $H_{\rm DR}^{n-1}(X/{\mathbb C}[\lambda])$ has regular
singular points and quasi-unipotent local monodromy at infinity (in
the sense of \cite[Section~VIII]{K5}).  Equation~(4.1) then implies
that ${\mathcal M}_\alpha$ has regular singular points and
quasi-unipotent local monodromy at infinity. 

To apply the results of \cite{K4}, we observe that the results of this
paper are valid when one replaces ${\mathbb C}[\lambda]$ by ${\mathbb
  C}(\lambda)$.  Let $\bar{\mathcal D}$ denote the ring of differential
operators with coefficients in ${\mathbb C}(\lambda)$ and define
\[ \bar{\mathcal M}_\alpha = \bar{\mathcal D}\bigg/\bigg(\sum_{l\in L}
\bar{\mathcal D}\Box_l + \sum_{i=1}^n \bar{\mathcal
  D}Z_{i,\alpha}\bigg). \]
Put 
\[ \bar{R} = {\mathbb C}(\lambda)[x_1^{\pm 1},\dots,x_n^{\pm 1}], \]
the coordinate ring of the $n$-torus ${\mathbb T}_{{\mathbb
    C}(\lambda)}$.  The proof of Theorem~1.4 establishes the following
result.

\begin{proposition}
If $\alpha$ is nonresonant for $A$, then $\bar{M}_\alpha\cong
H^n(\Omega^\bullet_{\bar{R}/{\mathbb C}(\lambda)},\nabla_\alpha)$ as
$\bar{\mathcal D}$-modules.
\end{proposition}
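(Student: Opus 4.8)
The plan is to repeat the proof of Theorem~1.4 from Section~2, replacing $\mathbb{C}[\lambda]$ by $\mathbb{C}(\lambda)$ throughout, and to observe that every step survives. First I would form $\bar{\mathcal P} = \bar{\mathcal D}\big/\sum_{l\in L}\bar{\mathcal D}\Box_l$; the identities $\Box_l\circ Z_{i,\alpha} = Z_{i,\beta}\circ\Box_l$ still hold, so right multiplication by the $Z_{i,\alpha}$ gives a family of commuting endomorphisms of $\bar{\mathcal P}$ and one obtains the cohomological Koszul complex $(\bar{\mathcal C}^\bullet,\delta_\alpha)$ with $H^n(\bar{\mathcal C}^\bullet,\delta_\alpha) = \bar{\mathcal M}_\alpha$. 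Write $\bar R_0 = \mathbb{C}(\lambda)[x^{a^{(1)}},\dots,x^{a^{(N)}}]\subseteq\bar R$, the analogue of the ring $R$, and let $\Omega^\bullet_{\bar R_0}\langle\log\rangle\subseteq\Omega^\bullet_{\bar R/\mathbb{C}(\lambda)}$ be the corresponding subcomplex.

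Two things then have to be checked, exactly as in Section~2. First, the $\mathbb{C}[\lambda]$-module isomorphism $\phi$ of \cite[Theorem~4.4]{A1} is given by the formula $\partial_1^{b_1}\cdots\partial_N^{b_N}\mapsto x^{\sum b_ja^{(j)}}$, which does not involve $\lambda$, so it extends to a $\mathbb{C}(\lambda)$-linear map $\bar\phi\colon\bar{\mathcal P}\to\bar R_0$, and the argument of \cite[Theorem~4.4]{A1} together with \cite[Corollary~2.4]{A2} shows that the resulting $\bar\phi\colon(\bar{\mathcal C}^\bullet,\delta_\alpha)\to(\Omega^\bullet_{\bar R_0}\langle\log\rangle,\nabla_\alpha)$ is an isomorphism of complexes of $\bar{\mathcal D}$-modules. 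Second, the analogue of Proposition~2.5 — that the inclusion $(\Omega^\bullet_{\bar R_0}\langle\log\rangle,\nabla_\alpha)\hookrightarrow(\Omega^\bullet_{\bar R/\mathbb{C}(\lambda)},\nabla_\alpha)$ is a quasi-isomorphism — follows by the same reduction through the sets $W(v,T)$ and the same proof of Lemma~2.6: the filtrations $F_p$, the homotopy operator $\rho$, and the computation (2.11) are given by identical formulas, and nonresonance is used only to guarantee $\ell_t(\alpha)+p\neq 0$ for $p\in\mathbb{Z}$, which is a condition on $\alpha\in\mathbb{C}^n$ alone and is untouched by enlarging the coefficient ring. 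Combining these with $H^n(\bar{\mathcal C}^\bullet,\delta_\alpha) = \bar{\mathcal M}_\alpha$ gives $\bar{\mathcal M}_\alpha\cong H^n(\Omega^\bullet_{\bar R/\mathbb{C}(\lambda)},\nabla_\alpha)$.

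A more conceptual way to package the same argument is to observe that $\bar{\mathcal D}$ is the localization of $\mathcal D$ at the (left and right) Ore set $\mathbb{C}[\lambda]\setminus\{0\}$, hence is flat as a right $\mathcal D$-module, and that every object and map occurring in Section~2 is carried to its barred counterpart by the exact functor $\bar{\mathcal D}\otimes_{\mathcal D}(-)$; an exact functor preserves isomorphisms of complexes and, by flatness, commutes with passage to cohomology, so both a quasi-isomorphism and the equality $H^n(\bar{\mathcal C}^\bullet,\delta_\alpha) = \bar{\mathcal M}_\alpha$ are preserved. I do not expect any real obstacle here: the only points requiring attention are that the cited results \cite[Theorem~4.4]{A1} and \cite[Corollary~2.4]{A2} are ``defined over $\mathbb{Z}$'' and hence insensitive to extension of scalars, and that the homotopy argument of Lemma~2.6 depends on $\alpha$ only through its nonresonance; both are immediate.
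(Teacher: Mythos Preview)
Your proposal is correct and follows exactly the paper's own approach: the paper does not give a separate argument for this proposition but simply asserts that ``the proof of Theorem~1.4 establishes the following result,'' i.e., that replacing $\mathbb{C}[\lambda]$ by $\mathbb{C}(\lambda)$ throughout Section~2 leaves every step intact. Your write-up is in fact more detailed than the paper's, and your alternative flatness/localization packaging is a clean way to see the same thing.
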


In the situation of Theorem~1.5, let $\bar{U}\subseteq{\mathbb
  T}_{{\mathbb C}(\lambda)}^n$ be the open set where $g$ is
nonvanishing.  Then we have the following result.

\begin{proposition}
Suppose $f=x_ng(x_1,\dots,x_{n-1})$ and $\alpha$ is nonresonant for $A$.
For all $i$ there are $\bar{\mathcal D}$-module isomorphisms
\[ H^i(\Omega^\bullet_{\bar{R}/{\mathbb
    C}(\lambda)},\nabla_\alpha)\cong
H^{i-1}(\Omega^\bullet_{\bar{U}/{\mathbb
    C}(\lambda)},\tilde{\nabla}_\alpha). \]
\end{proposition}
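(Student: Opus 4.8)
The plan is to repeat the proof of Theorem~1.5 essentially verbatim, with $\mathbb{C}[\lambda]$ replaced by $\mathbb{C}(\lambda)$ everywhere, so I only need to indicate why each ingredient survives the change of base ring. As in the Remark following Theorem~1.4, for $u\in\mathbb{Z}^n$ multiplication by $x^u$ gives an isomorphism of complexes of $\bar{\mathcal D}$-modules intertwining $\nabla_{\alpha+u}$ with $\nabla_\alpha$ on $\Omega^\bullet_{\bar{R}/\mathbb{C}(\lambda)}$, and multiplication by $x_1^{u_1}\cdots x_{n-1}^{u_{n-1}}/g^{u_n}$ intertwines $\widetilde{\nabla}_{\alpha+u}$ with $\widetilde{\nabla}_\alpha$ on $\Omega^\bullet_{\bar{U}/\mathbb{C}(\lambda)}$. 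Since nonresonance is preserved by integer translation of $\alpha$, it suffices to prove the stated isomorphisms for one representative of the coset $\alpha+\mathbb{Z}^n$, so we may assume $\alpha_n\notin\mathbb{Z}_{\leq0}$.

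Next I would record the $\mathbb{C}(\lambda)$-version of Proposition~2.5. Its proof, through Lemma~2.6, uses only that each $M_U$ is a free module over the base ring with the monomial basis $\{x^u\mid u\in U\}$, that the homotopy operator $\rho$ and the identity~(2.11) are purely formal, and that nonresonance makes $\ell_t(\alpha)+p$ a nonzero complex scalar for every relevant $p$; none of this depends on whether the base ring is $\mathbb{C}[\lambda]$ or $\mathbb{C}(\lambda)$. Hence, setting $\bar{R}_+=\mathbb{C}(\lambda)[x_1^{\pm1},\dots,x_{n-1}^{\pm1},x_n]$, the inclusion $(\Omega^\bullet_{\bar{R}_+}\langle\log\rangle,\nabla_\alpha)\hookrightarrow(\Omega^\bullet_{\bar{R}/\mathbb{C}(\lambda)},\nabla_\alpha)$ is a quasi-isomorphism of complexes of $\bar{\mathcal D}$-modules.

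It then remains to establish the $\mathbb{C}(\lambda)$-analog of Proposition~3.1, namely a quasi-isomorphism $(\Omega^\bullet_{\bar{R}_+}\langle\log\rangle,\nabla_\alpha)\to(\Omega^\bullet_{\bar{U}/\mathbb{C}(\lambda)}[-1],\widetilde{\nabla}_\alpha)$. One forms the same two-row double complex with $R_+$ replaced by $\bar{R}_+$; the vertical differential $\partial_v$ remains injective, so the canonical projection onto $(\Omega^{\bullet,1}/\partial_v\Omega^{\bullet,0})[-1]$ is a quasi-isomorphism by \cite[Appendix~B]{M}. The map $\gamma$ is defined by the identical formula, with coefficients $(-1)^{u_n}(\alpha_n)_{u_n}$; its surjectivity and the identity $\ker\gamma=\partial_v\Omega^{\bullet,0}$ rest only on $(\alpha_n)_{u_n}\neq0$ for $u_n\geq1$, which holds because $\alpha_n\notin\mathbb{Z}_{\leq0}$, together with the divisibility relation $\xi_M=g\eta$ inside $\mathbb{C}(\lambda)[x_1^{\pm1},\dots,x_{n-1}^{\pm1}]$, valid there exactly as over $\mathbb{C}[\lambda]$. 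Composing the two quasi-isomorphisms gives $H^i(\Omega^\bullet_{\bar{R}/\mathbb{C}(\lambda)},\nabla_\alpha)\cong H^{i-1}(\Omega^\bullet_{\bar{U}/\mathbb{C}(\lambda)},\widetilde{\nabla}_\alpha)$ for all~$i$. (When $\alpha_n\notin\mathbb{Z}$ one may instead use the $\mathbb{C}(\lambda)$-analog of Proposition~3.4 and bypass $\bar{R}_+$ altogether.) The only point needing attention — hence the main, though minor, obstacle — is bookkeeping: checking that $\bar{R}$, $\bar{R}_+$, the coordinate ring of $\bar{U}$, and the various $M_U$ remain free $\mathbb{C}(\lambda)$-modules with their expected monomial bases, and that the $\bar{\mathcal D}$-module structures given by the operators $D_\partial$ and $\widetilde{D}_\partial$ are well defined over $\mathbb{C}(\lambda)$. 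No genuine difficulty arises, which is precisely why the paper can assert without further comment that all of its results remain valid over $\mathbb{C}(\lambda)$.
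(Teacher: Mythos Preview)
Your proposal is correct and mirrors the paper's approach exactly: the paper gives no separate argument for this proposition, simply asserting that ``the results of this paper are valid when one replaces ${\mathbb C}[\lambda]$ by ${\mathbb C}(\lambda)$,'' and your write-up is a faithful unpacking of why the proof of Theorem~1.5 (via Proposition~2.5 and Proposition~3.1, with the optional shortcut through Proposition~3.4) goes through unchanged over the field ${\mathbb C}(\lambda)$.
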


Combining these propositions gives the following result.

\begin{corollary}
Suppose $f=x_ng(x_1,\dots,x_{n-1})$ and $\alpha$ is nonresonant for
$A$.  There is an isomorphism of $\bar{\mathcal D}$-modules
$\bar{\mathcal M}_\alpha\cong H^{n-1}(\Omega^\bullet_{\bar{U}/{\mathbb
    C}(\lambda)},\tilde{\nabla}_\alpha)$.  If in addition
$\alpha\in{\mathbb Q}^n$, then $\bar{\mathcal M}_\alpha$ comes from
geometry. 
\end{corollary}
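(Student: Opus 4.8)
The plan is to treat the two assertions separately, with essentially all the content coming from results already in place. For the first assertion I would argue exactly as Corollary~1.6 was deduced from Theorems~1.4 and~1.5: setting $i=n$ in Proposition~4.4 gives an isomorphism of $\bar{\mathcal D}$-modules $H^n(\Omega^\bullet_{\bar R/\mathbb C(\lambda)},\nabla_\alpha)\cong H^{n-1}(\Omega^\bullet_{\bar U/\mathbb C(\lambda)},\tilde\nabla_\alpha)$, and combining this with the isomorphism $\bar{\mathcal M}_\alpha\cong H^n(\Omega^\bullet_{\bar R/\mathbb C(\lambda)},\nabla_\alpha)$ of Proposition~4.3 yields the asserted $\bar{\mathcal M}_\alpha\cong H^{n-1}(\Omega^\bullet_{\bar U/\mathbb C(\lambda)},\tilde\nabla_\alpha)$.

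For the second assertion I would fix $\alpha\in\mathbb Q^n$, write $\alpha=(a_1/D,\dots,a_n/D)$ with $D\in\mathbb Z_{>0}$ and $0\le a_i<D$, and let $\chi$ be the irreducible character of $G=(\boldsymbol\mu_D)^n$ indexed by $(a_1,\dots,a_n)$. The idea is to run the construction at the start of Section~4 over $\mathbb C(\lambda)$ rather than $\mathbb C[\lambda]$, which is legitimate since, as noted there, all the preceding results remain valid with $\mathbb C[\lambda]$ replaced by $\mathbb C(\lambda)$. Concretely: take $\bar X\subseteq\mathbb T^n_{\mathbb C(\lambda)}$ to be the smooth affine hypersurface $x_n^Dg(x_1^D,\dots,x_{n-1}^D)-1=0$, on whose relative de Rham complex both $\bar{\mathcal D}$ and $G$ act, and use the resulting isomorphism of complexes of $\bar{\mathcal D}$-modules $(\Omega^\bullet_{\bar X/\mathbb C(\lambda)},d)^\chi\cong(\Omega^\bullet_{\bar U/\mathbb C(\lambda)},\tilde\nabla_\alpha)$. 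Passing to $(n-1)$st cohomology and combining with the first assertion then gives $\bar{\mathcal M}_\alpha\cong H^{n-1}(\Omega^\bullet_{\bar U/\mathbb C(\lambda)},\tilde\nabla_\alpha)\cong H^{n-1}_{\rm DR}(\bar X/\mathbb C(\lambda))^\chi$, so that $\bar{\mathcal M}_\alpha$ comes from geometry (the definition of Section~1 being read over $\mathbb C(\lambda)$ in place of $\mathbb C[\lambda]$).

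An alternative I would keep in reserve is to obtain the second assertion directly from the isomorphism~(4.1) by flat base change along the localization $\mathbb C[\lambda]\hookrightarrow\mathbb C(\lambda)$. Since $\bar{\mathcal D}$ is the localization of $\mathcal D$ at the nonzero elements of $\mathbb C[\lambda]$, hence flat over $\mathcal D$, and $\bar{\mathcal M}_\alpha$ is defined by the same generators and relations as $\mathcal M_\alpha$, one has $\bar{\mathcal M}_\alpha\cong\bar{\mathcal D}\otimes_{\mathcal D}\mathcal M_\alpha$; and, writing $\bar X=X\times_{\mathbb C[\lambda]}\mathbb C(\lambda)$ and computing the de Rham cohomology of the smooth affine scheme $X$ via global sections, one gets (as in~(4.2)) a $G$-equivariant isomorphism $H^{n-1}_{\rm DR}(\bar X/\mathbb C(\lambda))\cong\mathbb C(\lambda)\otimes_{\mathbb C[\lambda]}H^{n-1}_{\rm DR}(X/\mathbb C[\lambda])$ compatible with the Gauss-Manin connection. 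Taking $\chi$-isotypic parts and invoking~(4.1) again then yields $\bar{\mathcal M}_\alpha\cong H^{n-1}_{\rm DR}(\bar X/\mathbb C(\lambda))^\chi$.

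I do not anticipate a genuine obstacle: the corollary is a repackaging of Propositions~4.3 and~4.4 together with the geometric realization recalled at the start of Section~4. The one point I would take care to verify---whichever route is used for the second assertion---is that the passage from $\mathbb C[\lambda]$ to $\mathbb C(\lambda)$ is compatible both with the $\bar{\mathcal D}$-module structure coming from the Gauss-Manin connection and with the decomposition into $\chi$-isotypic components. This is routine, but worth spelling out.
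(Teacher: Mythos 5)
Your proposal is correct and follows essentially the same route as the paper: the first assertion is exactly the combination of Propositions~4.3 and~4.4, and the second is the Section~4 geometric realization (the hypersurface $\bar X$ with the $(\boldsymbol{\mu}_D)^n$-action and its $\chi$-isotypic component) carried out over ${\mathbb C}(\lambda)$, which is precisely how the paper obtains the isomorphism~(4.6). The base-change alternative you keep in reserve is a harmless variant of the same argument.
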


Explicitly, letting $\bar{X}\subseteq{\mathbb T}^n_{{\mathbb
    C}(\lambda)}$ be the hypersurface
$x_n^Dg(x_1^D,\dots,x_{n-1}^D)-1=0$, we have (corresponding to
Equation~(4.1))
\begin{equation}
\bar{\mathcal M}_\alpha\cong H^{n-1}_{\rm DR}(\bar{X}/{\mathbb
  C}(\lambda))^\chi.
\end{equation}
By \cite[Theorem 5.7]{K4} $H^{n-1}_{\rm DR}(\bar{X}/{\mathbb
  C}(\lambda))^\chi$ has a full set of polynomial solutions modulo~$p$
for almost all primes $p$ if and only if it has a full set of algebraic
solutions.  Note that the solution sets of ${\mathcal M}_\alpha$ and
$\bar{\mathcal M}_\alpha$ in the algebraic closure of ${\mathbb
  C}(\lambda)$ are identical.  From Equation~(4.6), we then get the
following result.  

\begin{corollary}
Suppose $f=x_ng(x_1,\dots,x_{n-1})$ and $\alpha\in{\mathbb Q}^n$ is
nonresonant for $A$.  The hypergeometric ${\mathcal D}$-module
${\mathcal M}_\alpha$ has a full set of polynomial solutions modulo
$p$ for almost all primes $p$ if and only if it has a full set of
algebraic solutions. 
\end{corollary}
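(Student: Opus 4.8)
The plan is to reduce the statement entirely to results already assembled in Section~4, so that almost no new work is required. First I would invoke Corollary~1.6 (or equivalently the isomorphism~(4.6) together with Corollary~4.5): since $\alpha\in{\mathbb Q}^n$ is nonresonant for $A$, the hypergeometric ${\mathcal D}$-module ${\mathcal M}_\alpha$ is isomorphic, as a $\bar{\mathcal D}$-module after base change to ${\mathbb C}(\lambda)$, to the $\chi$-isotypic piece $H^{n-1}_{\rm DR}(\bar X/{\mathbb C}(\lambda))^\chi$ of the de Rham cohomology of the explicit hypersurface $\bar X:\ x_n^Dg(x_1^D,\dots,x_{n-1}^D)-1=0$, where $D$ is the common denominator of the $\alpha_i$ and $\chi$ is the character of $(\boldsymbol\mu_D)^n$ attached to $\alpha$ as in Section~4. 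This is the key geometric input: it realizes ${\mathcal M}_\alpha$ as a subquotient of the cohomology of a smooth affine family of hypersurfaces.

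Next I would apply \cite[Theorem~5.7]{K4}, which is precisely the statement that for a ${\mathcal D}$-module coming from geometry in this way, having a full set of polynomial solutions modulo $p$ for almost all $p$ is equivalent to having a full set of algebraic solutions. Here one must check that the hypotheses of Katz's theorem are met: this is exactly where the discussion immediately preceding Proposition~4.3 is used, namely that $H^{n-1}_{\rm DR}(X/{\mathbb C}[\lambda])$—and hence, via (4.1)/(4.6), ${\mathcal M}_\alpha$ and $\bar{\mathcal M}_\alpha$—has regular singular points and quasi-unipotent local monodromy at infinity (as deduced from \cite[Section~14]{K3} after pulling back to a curve). With that in hand, Theorem~5.7 of \cite{K4} applies directly to $H^{n-1}_{\rm DR}(\bar X/{\mathbb C}(\lambda))^\chi$.

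Finally I would transfer the conclusion back to ${\mathcal M}_\alpha$ itself. The point, already noted in the excerpt, is that the solution sets of ${\mathcal M}_\alpha$ and of $\bar{\mathcal M}_\alpha$ inside the algebraic closure of ${\mathbb C}(\lambda)$ coincide, because passing from ${\mathbb C}[\lambda]$ to ${\mathbb C}(\lambda)$ only localizes the coefficient ring and does not change the space of (germs of) solutions; similarly, reduction modulo $p$ of the system is unaffected by this localization for almost all $p$. Therefore "full set of polynomial solutions mod $p$ for almost all $p$" and "full set of algebraic solutions" hold for ${\mathcal M}_\alpha$ if and only if they hold for $\bar{\mathcal M}_\alpha\cong H^{n-1}_{\rm DR}(\bar X/{\mathbb C}(\lambda))^\chi$, and the equivalence for the latter is \cite[Theorem~5.7]{K4}. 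The main obstacle is not any single computation but rather the careful bookkeeping needed to verify that the regularity and quasi-unipotence hypotheses of \cite[Theorem~5.7]{K4} genuinely transfer through the isotypic-component construction and the base change ${\mathbb C}[\lambda]\rightsquigarrow{\mathbb C}(\lambda)$; once that is granted, the corollary is immediate.
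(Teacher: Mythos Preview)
Your proposal is correct and follows essentially the same route as the paper: invoke the isomorphism~(4.6) to identify $\bar{\mathcal M}_\alpha$ with $H^{n-1}_{\rm DR}(\bar X/{\mathbb C}(\lambda))^\chi$, apply \cite[Theorem~5.7]{K4} to that geometric object, and then use the observation that ${\mathcal M}_\alpha$ and $\bar{\mathcal M}_\alpha$ have the same solution sets in the algebraic closure of ${\mathbb C}(\lambda)$. Your added remark about verifying the regularity and quasi-unipotence hypotheses via the earlier discussion in Section~4 is a reasonable elaboration, but the underlying argument is the same as the paper's.
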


\end{document}